\pgfplotsset{compat=1.18}
\numberwithin{equation}{section}
\numberwithin{figure}{section}
\theoremstyle{plain}
\newtheorem{theorem}{Theorem}[section]
\crefname{theorem}{Theorem}{Theorems}
\newtheorem{lemma}[theorem]{Lemma}
\crefname{lemma}{Lemma}{Lemmata}
\newtheorem{proposition}[theorem]{Proposition}
\crefname{proposition}{Proposition}{Propositions}
\crefname{Corollary}{Corollary}{Corollaries}
\theoremstyle{definition}
\crefname{definition}{Definition}{Definitions}
\newtheorem{condition}[theorem]{Condition}
\crefname{condition}{Condition}{Conditions}
\newtheorem{remark}[theorem]{Remark}
\crefname{remark}{Remark}{Remarks}
\crefname{example}{Example}{Examples}
\crefname{algocf}{Algorithm}{Algorithms}
\crefname{algocfline}{line}{lines}
\crefname{section}{Section}{Sections}
\crefname{subsection}{Subsection}{Subsections}
\crefname{appendix}{Appendix}{Appendices}
\crefname{figure}{Figure}{Figures}
\crefname{table}{Table}{Tables}
\crefname{algorithm}{Algorithm}{Algorithms}
\crefname{assumption}{Assumption}{Assumptions}
\newenvironment{assumptionp}[1]{
  
  \assumptionalt
}{\endassumptionalt}
\crefname{assumptionalt}{Assumption}{Assumptions}
\newcommand{\bitem}{\begin{itemize}}
\newcommand{\eitem}{\end{itemize}}
\newcommand{\mc}[1]{\mathcal{#1}}
\newcommand{\N}{\mathbb{N}}
\newcommand{\R}{\mathbb{R}}
\newcommand{\bpm}{\begin{pmatrix}}
\newcommand{\epm}{\end{pmatrix}}
\newcommand{\bvm}{\begin{vmatrix}}
\newcommand{\evm}{\end{vmatrix}}
\newcommand{\bsm}{\left(\begin{smallmatrix}}
\newcommand{\esm}{\end{smallmatrix}\right)}
\newcommand{\la}{\langle}
\newcommand{\ra}{\rangle}
\newcommand{\vphi}{\varphi}
\newcommand{\dvphi}{\nabla \vphi}
\newcommand{\df}{\nabla f}
\newcommand{\intdom}{\intr \dom}
\newcommand{\eins}{\mathbb{1}}
\DeclareMathSymbol{\mydiv}{\mathbin}{symbols}{"04}
\DeclareMathOperator{\Diag}{Diag}
\DeclareMathOperator{\dom}{dom}
\DeclareMathOperator{\intr}{int}
\DeclareMathOperator{\rg}{rg}
\DeclareMathOperator*{\argmin}{arg min}
\DeclareMathOperator{\arcsinh}{arcsinh}
\DeclareMathOperator{\KL}{KL}
\DeclareMathOperator{\Poi}{Poi}
\DeclareMathOperator{\Var}{Var}
\renewcommand\paragraph{\@startsection{paragraph}{4}{0pt}%
  {3.25ex \@plus 1ex \@minus .2ex}
  {-1em}
  {\normalfont\itshape}*}
\renewcommand\thefootnote{\@fnsymbol\c@footnote}
\title[Multilevel Bregman Proximal Gradient Descent]{ Multilevel Bregman Proximal Gradient Descent}
\author{Yara Elshiaty\textsuperscript{*,\ddag} \and Stefania Petra\textsuperscript{\ddag}}
\begin{document}

\begin{abstract}
We present the Multilevel Bregman Proximal Gradient Descent (ML BPGD) method, a novel multilevel optimization framework tailored to constrained convex problems with relative Lipschitz smoothness. Our approach extends the classical multilevel optimization framework (MGOPT) to handle Bregman-based geometries and constrained domains. We provide a rigorous analysis of ML BPGD for multiple coarse levels and establish a global linear convergence rate. We demonstrate the effectiveness of ML BPGD in the context of image reconstruction, providing theoretical guarantees for the well-posedness of the multilevel framework and validating its performance through numerical experiments.
\end{abstract}

\maketitle
\footnotetext[1]{Institute for Mathematics, Heidelberg University \; (\url{elshiaty@math.uni-heidelberg.de})}
\footnotetext[3]{Institute for Mathematics \& Centre for Advanced Analytics and Predictive Sciences (CAAPS), University of Augsburg \; (\url{stefania.petra@uni-a.de})}

\markboth{\MakeUppercase{Yara Elshiaty and Stefania Petra}}{\MakeUppercase{Multilevel Bregman Proximal Gradient Descent}}

\tableofcontents

\section{Introduction}
\addtocontents{toc}{\protect\setcounter{tocdepth}{1}}
WWe consider the constrained convex optimization problem  
\begin{equation} \label{eq:main_problem}
    f^{\min} = 
    \min_{x \in C} f(x),
\end{equation}
where $ C \subseteq \R^n$ is a closed, convex set, and $f: C \to \R$ is a differentiable convex function. 
While many first-order methods assume Euclidean smoothness of $f$, i.e., a Lipschitz-continuous gradient on $C$
\begin{equation} \label{eq:Lipschitz-condition}
    \norm{\df(x) - \df(y)} \leq L \norm{x-y}  \quad \text{for all } x,y \in C,
\end{equation}  
this assumption often fails in various problems including imaging applications.
These problems frequently exhibit structured smoothness more accurately captured in a Bregman geometry. Accordingly, we work under the \emph{relative smoothness condition} \cite{Bauschke:2017aa}
\begin{equation}\label{eq:rel-smoothness}
    D_f(x,y) \leq L D_\vphi(x,y) \quad \text{for all } x \in C ,y \in \intr C,
\end{equation}
where $D_f$ and $D_\vphi$ denote Bregman divergences associated with $f$ and a convex reference function $\vphi$ defined on $C$, respectively.

To address the problem setting of \eqref{eq:main_problem} we adopt \emph{Bregman proximal gradient descent (BPGD)}, which is a first-order iterative method, that extends the classical forward-backward splitting approach using Bregman divergences, \cite{Beck:2003aa}. Given an interior point $x \in \intr C$, and a step size $\tau > 0$, the BPGD update is given by
\begin{equation}\label{eq:BPGD-subproblem}
    x^{+}_\tau := \argmin_{u \in C} \tau \la \df(x), u - x \ra + D_\vphi(u,x).
\end{equation}

Despite its success in structured optimization, BPGD suffers two main limitations: (i) it cannot be accelerated beyond $\mathcal{O}(1/k)$ convergence under mere relative smoothness \cite{Dragomir2021Optimal}, and (ii) its standard formulation does not exploit multilevel or hierarchical structure inherent in many inverse problems.

In fact, inverse problems in imaging frequently admit \emph{natural multilevel discretizations} in the form of coarse-to-fine representations which provide a hierarchy of reduced problems.
Such structure can be exploited to reduce computational costs and improve conditioning, as demonstrated by Nash's multilevel optimization framework (MGOPT) \cite{Nash:2000}. However, adapting BPGD to this setting is non-trivial, particularly due to the challenge of handling \emph{explicit constraints} across discretization levels.

Building on this idea, multilevel approaches exploit the hierarchy of discretizations by solving coarse models with significantly fewer variables to obtain descent directions on the finer levels. This strategy offers several key advantages. First, it enhances computational efficiency by reducing the dimensionality of subproblems, especially during the early iterations where full-dimensionality accuracy is less critical. Second, optimization problems on coarser levels often exhibit improved conditioning. Together, these properties typically yield substantial acceleration in the initial phases of the optimization.

This behavior is also observed in acceleration schemes for BPGD, which often show rapid progress initially. For example, Nesterov’s acceleration to the relatively smooth setting was extended in \cite{Hanzely:2021vc}, achieving rates up to to $\mc{O}(k^{-\gamma})$ for $\gamma \in (0, 2]$, but only under additional structure in the reference function $\vphi$, beyond mere relative smoothness. In typical inverse problems with divergences like Kullback–Leibler, such structure is not available: theoretical guarantees remain limited to $\gamma = 1$, and numerical results indicate that accelerated rates are only transient, with asymptotic behavior reverting to the standard $O(1/k)$ rate \cite{Raus2024}.

\subsection*{Related work}
Nash's MGOPT algorithm \cite{Nash:2000} presented a general framework for adapting unconstrained smooth optimization methods in a multilevel setting. Constraints were subsequently incorporated in two distinct ways: (i) by allowing composite objectives of the form $f(x) + g(x)$, where $f$ is a smooth data term and $g$ encodes the constraint as an indicator function $g(x) = \begin{cases}
0, &x \in C \\ \infty, &x \neq C
\end{cases}$, or (ii) by building the constraints directly into the multilevel design. 
Existing works following the first approach \cite{ang2023mgprox,Parpas:2017,Parpas:2016_Magma,Lauga2024} assume Lipschitz continuity of the gradient $\df(x)$, an assumption that may not hold in many imaging problems.
A key example arises in Poisson linear inverse problems, which show up naturally whenever the imaging process involves counting photons arriving in the image domain \cite{Vardi_EM, Geman1984}, such as image deconvolution in microscopy and astronomy, or tomographic reconstruction in PET. 
Approaches of the second type typically develop models tailored to the specific structure of the constraints. Most notably box constraints $\{x \in \R^n: l \leq x \leq u\}$ have been analyzed in the context of trust-region and line search methods \cite{Gratton:2008,WenGoldfarb:2009}. The authors of \cite{Mueller2022} proposed a geometric multilevel approach by mapping the box constraint into a Riemannian manifold via the Hessian metric $\nabla^2 \vphi$, interpreting mirror descent (MD), a special case of BPGD, as Riemannian gradient descent, \cite{Raskutti2015}. 
While this method provides a coherent framework for handling convex constraints in a multilevel setting, verifying descent directions using coarse information is more involved in the Riemannian context and, to the best of our knowledge, lacks convergence guarantees at present. 

Our proposed algorithm, ML-BPGD, addresses the limitations of both approaches by introducing a general strategy for incorporating convex constraints into the multilevel structure with convergence guarantee, specifically tailored to data terms satisfying relative smoothness.
\subsection*{Contribution and organization}
The remainder of this paper is organized as follows. \Cref{sec:preliminaries} reviews the BPGD method and introduces the assumptions and properties that will be central to the development of ML-BPGD. In \Cref{sec:MLBPGD-big-section}, we begin with an overview of the multilevel algorithm in the unconstrained setting, which we then extend to accommodate BPGD with convex constraints. For clarity of presentation, we first focus on the case of a single coarse level to establish notation and intuition, before generalizing to the full multilevel setting. This section also introduces our algorithm, proves that it is well-defined and establishes a convergence result for the function values. Finally, \Cref{sec:NumExperiments} presents extensive numerical experiments comparing BPGD and ML-BPGD on imaging problems with inherent Bregman geometry.

\subsection*{Notation}
We denote by $\mathbb{R}^n_+ = \{ x \in \mathbb{R}^n : x_i \geq 0, \, i=1,\ldots,n \}$ the nonnegative orthant of $\mathbb{R}^n$, and by $\mathbb{R}^n_{++} = \{ x \in \mathbb{R}^n : x_i > 0, \, i=1,\ldots,n \}$ the positive orthant. For a vector $x \in \mathbb{R}^n$, we write $\mathrm{Diag}(x)$ for the $n \times n$ diagonal matrix whose $i$-th diagonal entry is $x_i$. The functions $\exp(\cdot)$ and $\log(\cdot)$ with vector argument denote the elementwise exponential and natural logarithm, respectively. We use $[n]$ to denote the set $\{1, 2, \ldots, n\}$ for $n \in \N$, and set $[n]_0 = \{0, 1, \ldots, n\}$.

\addtocontents{toc}{\protect\setcounter{tocdepth}{2}}
\section{Bregman proximal gradient descent}\label{sec:preliminaries}
We briefly recall key concepts related to Bregman divergences and relative smoothness that underpin BPGD. 

Let $\vphi: \intr \dom \vphi \to \mathbb{R}$ be a differentiable convex function. The \emph{Bregman divergence} associated with $\vphi$ is defined as
\begin{equation}
    D_\vphi(x,y) = \vphi(x) - \vphi(y) - \la \dvphi(y), x-y \ra.
\end{equation}
Whenever $\vphi$ is convex, the Bregman divergence is convex in its first argument, 
and it is strictly positive for all $x \neq y$ if $\vphi$ is strictly convex.
In this work, we assume $\vphi$ is strictly convex on the feasible set $C$. Bregman divergences are also linear with respect to the generating function; for convex $f$ and $g$, and any $\gamma \in \mathbb{R}$,
\begin{equation} \label{eq: linearity-Bregman-div}
    D_{f+\gamma g} = D_f + \gamma D_g \quad \text{on} \intr \dom f \cap \intr \dom g .
\end{equation}

A function $f$ is said to be $L$-smooth relative to $\vphi$ on $C$ if
\begin{equation}
    D_f(x,y) \leq L D_\vphi(x,y) \quad \text{for all } x \in C, y \in \intr C.
\end{equation}
This condition admits the following equivalent formulations:
\begin{enumerate}
    \item $L\vphi-f$ is convex on $C$,
    \item $f(x) \leq f(y) + \la \df(y), x-y \ra + LD_\vphi(x,y)$ for all $x, y \in \intr C$,
    \item under twice differentiability, $\nabla^2 f(x) \preceq L \nabla^2 \vphi(x)$, where $\preceq$ denotes the L\"owner partial order 
    on symmetric matrices,
\end{enumerate}
see \cite{Bauschke:2017aa,Lu2018} for detailed proofs.

Utilizing BPGD updates \eqref{eq:BPGD-subproblem} to solve \eqref{eq:main_problem} requires the capability of solving instances of a subproblem of the general form
\begin{equation} \label{eq: simplified class of BPGD problems}
    x^{+}_\tau = \argmin_{u \in C} \{\la c, u \ra + \vphi(u) \}
\end{equation}
with $c = \tau \df(x) - \dvphi(x)$. In the typical formulation and implementation of a first-order method to solve \eqref{eq:main_problem}, one selects the strictly convex reference function $\vphi$ based on the structure of the constraints defining $C$ of constraints, while also ensuring that the subproblem \eqref{eq:BPGD-subproblem} (or its more general form \eqref{eq: simplified class of BPGD problems}) can be solved efficiently.
In summary, we choose $\vphi$ such that it satisfies the following assumption.
\begin{assumptionp}{A}[Solvability of the problem]\label{ass:solveability}
    \begin{enumerate}
    \item $f$ is $L$-smooth relative to $\vphi$ on $C$,
    \item the subproblem \eqref{eq:BPGD-subproblem} always has a solution on the (relative) interior of $C$ that is a singleton and efficiently computable.
\end{enumerate}
\end{assumptionp}
\begin{remark} \label{rm:MD-special-setting}
    One special case where \cref{ass:solveability} is readily satisfied is when $\vphi$ is a Legendre function, i.e. both essentially smooth and essentially strictly convex. In this context, let $\dom \vphi = C$. In \cite[Thm 3.12]{Bauschke:1997aa}, the authors show that the Bregman projection $\bar x = \argmin D_\vphi(\cdot,y)$ of an interior point $y \in \intr C$ exists on $C$, lies in the interior, and is unique.
    Here, essential smoothness ($\norm{\dvphi(x_n)} \to \infty$ for $x_n \to x$, $x_n \in \intr \dom \vphi$, see \eqref{eq:convex-conjugate}) ensures existence, while essential strict convexity ($\dvphi^\ast(\dvphi(y)) = \{y\}$ for $y \in \dom \dvphi$) guarantees uniqueness. These results are extended in \cite{Petra2013a} to the case with an added linear term $\la l, \cdot \ra + D_\vphi(\cdot,y)$ for $\norm{l} < \infty$ which covers our setting.

    Furthermore, the Legendre property of $\vphi$ enables interpreting \eqref{eq:BPGD-subproblem} as an MD update. This is possible since the gradient mapping
    \begin{equation} \label{eq:convex-conjugate}
        \dvphi: \intr \dom \vphi \to \intr \dom \vphi^*, \quad x \mapsto \dvphi(x)
    \end{equation}
    is a topological isomorphism with inverse $(\dvphi)^{-1} = \dvphi^*$, see \cite[Theorem 26.5]{Rockafellar:1997}, where $\vphi^\ast$ denotes the convex conjugate function
    \begin{equation}
        \vphi^\ast(x^\ast) = \sup_{x \in \intr \dom \vphi} \{\la x^\ast, x \ra - \vphi(x) \}.
    \end{equation}
    Using first-order optimality criteria, and by simple reformulations, this structure allows us to rewrite \eqref{eq:BPGD-subproblem} in the MD form
    \begin{equation} \label{eq:MD}
         x^{+}_\tau = \dvphi^*(\dvphi(x)-\tau \df(x)),
    \end{equation}
    as introduced in \cite{Nemirovski1983} and explored in the context of BPGD in \cite{Beck:2003aa}. 
    The MD perspective localizes the computational burden: if $\dvphi^\ast$ has a closed form or is inexpensive to compute, then \cref{ass:solveability}.(2) is automatically satisfied, see \cref{apdx:A} for examples. 
\end{remark}
\subsection{Properties of BPGD} This section recalls a few theoretical results on the BPGD update which will serve as the foundation for the multilevel extension in \Cref{sec:MLBPGD-big-section}.

\begin{lemma}[Fixed point property of BPGD] \label{lm: fixed point property of BPGD}
    Let $x$ be a minimizer of $f$ over $C$. Then, for any $\tau > 0$, it holds that
    \begin{equation}
        x^+_\tau=x.
    \end{equation}
\end{lemma}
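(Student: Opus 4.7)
The plan is to show that $x$ itself attains the minimum of the BPGD subproblem objective, so that uniqueness from \cref{ass:solveability} forces $x^+_\tau = x$. Denote the subproblem objective by $\Psi(u) := \tau \la \df(x), u - x \ra + D_\vphi(u,x)$. Evaluating at $u = x$ gives $\Psi(x) = 0$, so the task reduces to showing that $\Psi(u) \geq 0$ for every $u \in C$, with equality only at $u = x$.

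I would control the two summands of $\Psi$ separately. For the linear term, invoke the first-order optimality characterization for the convex problem \eqref{eq:main_problem}: since $f$ is differentiable convex, $C$ is closed convex, and $x \in \argmin_C f$, we have $\la \df(x), u - x \ra \geq 0$ for all $u \in C$. Multiplying by $\tau > 0$ preserves this inequality. For the Bregman term, use the assumed strict convexity of $\vphi$ on $C$, which (as noted in \cref{sec:preliminaries}) implies $D_\vphi(u,x) \geq 0$ with equality if and only if $u = x$. Summing the two bounds yields $\Psi(u) \geq 0$ for all $u \in C$, with equality only at $u = x$.

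Hence $x$ is a minimizer of $\Psi$ over $C$. Since \cref{ass:solveability}.(2) guarantees that the minimizer is a singleton (and lies in the relative interior of $C$, which is consistent with $x \in \intr C$ being the assumed domain of the BPGD update), we conclude $x^+_\tau = x$, as claimed.

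There is no substantive obstacle in this argument: it is a direct combination of the variational inequality characterizing a constrained minimizer of a smooth convex function with the nonnegativity of a Bregman divergence generated by a strictly convex reference. The only mild point to verify is that $x$ lies in the interior of $C$ so that $\dvphi(x)$ and hence the update $x^+_\tau$ are well-defined; this is implicit in the statement, since otherwise the right-hand side $x^+_\tau$ would not be defined by \eqref{eq:BPGD-subproblem}.
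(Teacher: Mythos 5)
Your proof is correct and rests on the same two ingredients as the paper's: the variational inequality $\la \df(x), u-x\ra \ge 0$ characterizing the constrained minimizer $x$, and the (strict) nonnegativity of $D_\vphi$. The only cosmetic difference is direction: you show $x$ itself minimizes the subproblem objective and invoke uniqueness from \cref{ass:solveability}, whereas the paper tests the optimality inequality of $x^+_\tau$ at $u=x$ to force $D_\vphi(x^+_\tau,x)=0$; both are equally valid.
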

\begin{proof}
By definition of $x^+_\tau$, we have
\begin{equation}
    \tau \la \df(x), x^+_\tau - u \ra + D_\vphi(x^+_\tau, x) \leq  D_\vphi(u,x) \; \forall u \in C.
\end{equation}
Choosing $u = x$, this reduces to
\begin{equation}
    D_\vphi(x^+_\tau, x) \leq \tau \la \df(x), x - x^+_\tau \ra .
\end{equation}
The statement follows from the non-negativity of the Bregman divergence and the first-order optimality condition $\la \df(x), x - u \ra \leq 0$ for all $u \in C$, which holds since $x$ is a minimizer of $f$ over $C$.
\end{proof}

Henceforth, we assume that \cref{ass:solveability} holds. We now state a key result for BPGD.

\begin{lemma}[Sufficient descent of BPGD, \cite{Teboulle_2018}] \label{lm:sufficient-descent-BPGD} Let $x \in \intr C$. For any $\tau \in (0, L^{-1}]$, the following inequality holds
    \begin{equation}
        f(x^+_\tau) \leq f(x) - \frac{D_\vphi(x, x^+_\tau)}{\tau}.
    \end{equation}
\end{lemma}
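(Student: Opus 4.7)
The plan is to combine two ingredients: the relative smoothness descent inequality (item (2) of the equivalent formulations of $L$-smoothness relative to $\vphi$) applied at the pair $(x^+_\tau, x)$, and the first-order optimality condition of the subproblem \eqref{eq:BPGD-subproblem} tested at $u = x$. The obstacle to be handled is that relative smoothness produces a term with $D_\vphi(x^+_\tau, x)$ (i.e., forward Bregman distance), whereas the conclusion we are after is in terms of $D_\vphi(x, x^+_\tau)$ (the reverse). The three-point identity for Bregman divergences is the bridge between the two.

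\textbf{Step 1: Optimality of the subproblem.} By \cref{ass:solveability}, $x^+_\tau \in \intr C$ is the unique minimizer of $u \mapsto \tau \la \df(x), u - x \ra + D_\vphi(u, x)$. Its first-order condition reads
\begin{equation*}
    \la \tau \df(x) + \dvphi(x^+_\tau) - \dvphi(x),\, u - x^+_\tau \ra \geq 0 \quad \text{for all } u \in C.
\end{equation*}
Specializing $u = x$ and rearranging yields
\begin{equation*}
    \tau \la \df(x),\, x^+_\tau - x \ra \leq \la \dvphi(x) - \dvphi(x^+_\tau),\, x^+_\tau - x \ra.
\end{equation*}

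\textbf{Step 2: The three-point identity.} A direct expansion of Bregman divergences gives
\begin{equation*}
    D_\vphi(x, x^+_\tau) + D_\vphi(x^+_\tau, x) = \la \dvphi(x) - \dvphi(x^+_\tau),\, x - x^+_\tau \ra,
\end{equation*}
so Step 1 translates into
\begin{equation*}
    \la \df(x),\, x^+_\tau - x \ra \leq -\frac{1}{\tau}\bigl[D_\vphi(x, x^+_\tau) + D_\vphi(x^+_\tau, x)\bigr].
\end{equation*}

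\textbf{Step 3: Plug in relative smoothness and choose $\tau$.} Relative smoothness gives
\begin{equation*}
    f(x^+_\tau) \leq f(x) + \la \df(x), x^+_\tau - x \ra + L\, D_\vphi(x^+_\tau, x).
\end{equation*}
Combining with Step 2,
\begin{equation*}
    f(x^+_\tau) \leq f(x) - \frac{1}{\tau} D_\vphi(x, x^+_\tau) + \Bigl(L - \frac{1}{\tau}\Bigr) D_\vphi(x^+_\tau, x).
\end{equation*}
For $\tau \in (0, L^{-1}]$ the coefficient $L - \tfrac{1}{\tau}$ is non-positive, and since $D_\vphi(x^+_\tau, x) \geq 0$ (strict convexity of $\vphi$ on $C$), the last term can be dropped to obtain the claim. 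The only subtle point is ensuring that both $x$ and $x^+_\tau$ lie in $\intr C$ so that relative smoothness and the three-point identity are applicable, which is guaranteed by the assumption $x \in \intr C$ together with \cref{ass:solveability}(2).
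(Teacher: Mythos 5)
Your proof is correct. The paper states this lemma with a citation to Teboulle and gives no proof of its own, but your argument --- the prox optimality condition tested at $u = x$, the symmetrized three-point identity $D_\vphi(x, x^+_\tau) + D_\vphi(x^+_\tau, x) = \la \dvphi(x) - \dvphi(x^+_\tau), x - x^+_\tau \ra$ to trade the forward divergence for the reverse one, and then the relative-smoothness descent inequality with $\tau \leq L^{-1}$ to drop the leftover $\bigl(L - \tfrac{1}{\tau}\bigr) D_\vphi(x^+_\tau, x) \leq 0$ term --- is exactly the standard derivation in the cited reference (there phrased via the Chen--Teboulle three-point lemma), and all side conditions you flag ($x, x^+_\tau \in \intr C$) are indeed covered by \cref{ass:solveability}.
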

The BPGD scheme generates the sequence $\{ x^k \}_{k \in \N}$, where $x^k := \left(x^{k-1}\right)^+$, starting from an initial point $x^0 \in \intr C$. 
Under the aforementioned setting, the function values of BPGD converge sublinearly. Specifically, one can prove that after $k$ iterations, using a step size $\tau = L^{-1}$ the following holds for any $x \in C$:
\begin{equation}
    f(x^k) - f(x) \leq \frac{LD_\vphi(x, x^0)}{k},
\end{equation}
see \cite{Nemirovski1983, Teboulle_2018}. Analogous to the Euclidean case of primal gradient descent, a linear convergence rate can be attained under a Polyak-\L{}ojasiewicz (PL)-type inequality adapted to the Bregman setting. This inequality bounds the Bregman distance of a BPGD iterate and its initial point to the minimum of $f$. The following assumptions were first introduced in \cite{Bauschke:19} in the context of MD. 

\begin{assumptionp}{B}[Polyak-\L{}ojasiewicz-like condition]\label{ass:PL-with-L-dependence}
    There exists a function $\theta: \R_{++} \to \R_{++}$ and a scalar $\eta > 0$ such that
    \begin{equation} \label{eq:dependence on tau}
        D_\vphi(x,x_\tau^+) \geq \theta(\tau) D_\vphi(x, x_1^+)
    \end{equation}
    and 
    \begin{equation} \label{eq: Pl-cond}
        D_\vphi(x, x^+_1) \geq \eta (f(x) - f^{\min})
    \end{equation}
    for all $x \in \intr C$.
\end{assumptionp}

\begin{remark}      
\begin{enumerate}[label=(\alph*), leftmargin=*, align=left, itemsep=3pt]
    \item (\textbf{Scaling condition \eqref{eq:dependence on tau}})  
    The function $\theta$ quantifies the scaling of the Bregman divergence in dependence on the step size $\tau$. In the case $\vphi = \frac{1}{2}\norm{\cdot}^2_2$, $\theta$ is a quadratic function due to the homogeneity of norms. This, however, does not hold for general divergences.  
    A sharper definition of $\theta$ would also depend on $x$ rather than assuming uniformity. In practice, verifying \eqref{eq:dependence on tau} is often challenging and typically requires an explicit expression for the update $x_\tau^+$, something not even guaranteed in the MD case, see \cite{Bauschke:19} and the examples therein.

    \item (\textbf{PL-inequality \eqref{eq: Pl-cond}})  
    The condition \eqref{eq: Pl-cond} simplifies to the known PL-inequality  
    \begin{equation}
        \frac{1}{2} \norm{\df(x)}_2^2 \geq \eta (f(x) - f^{\min})
    \end{equation}
    in the special case $\vphi = \frac{1}{2}\norm{\cdot}^2_2$.  
    Moreover, if $f$ is $\mu$-strongly convex relative to $\vphi$, i.e.,  
    \begin{equation}
        f - \mu \vphi \quad \text{ is convex on } C,
    \end{equation}
    then for $\mu > 1$, the result \cite[Lemma 3.3]{Bauschke:19} establishes \eqref{eq: Pl-cond} in the setting where $\vphi$ is Legendre. The proof relies on the identity of the gradient envelope
    \begin{equation}
        \min_u \tau \la \df(x), u-x \ra + D_\vphi(u,x) = - D_\vphi(x, x_\tau^+),
    \end{equation}
    which follows from the three-point identity \cite{Chen_Teboulle_1993}. Crucially, this derivation does not depend on $\vphi$ being Legendre. Hence, \eqref{eq: Pl-cond} follows directly from the convexity of $f-\vphi$.

    \item (\textbf{On the order of Bregman arguments})  
    The ordering of the Bregman arguments in \cref{ass:PL-with-L-dependence} is not canonical. One could reverse the arguments and still maintain a valid generalization of the Euclidean case.
    For Bregman divergences that are not entirely asymmetric, i.e., when  
    \begin{equation}
        \alpha(\vphi) := \inf \left\{ \frac{D_\vphi(x,y)}{D_\vphi(y,x)} : x,y \in \intdom \vphi, x \neq y \right\} \in [0,1],
    \end{equation}
    with $\alpha(\vphi) \neq 0$, the ordering affects the expression only by a constant factor, rendering the choice largely inconsequential. However, in fully asymmetric cases such as the log-barrier function (cf. \cref{apdx:A}), where $\alpha(\vphi) = 0$, the order matters. Our adopted ordering aligns with the definition of the BPGD subproblem \eqref{eq:BPGD-subproblem} and the connection of \eqref{eq: Pl-cond} to relative strong convexity.
\end{enumerate}
\end{remark}

Examples of objective and reference function pairs
$(f,\vphi)$ satisfying the Bregman Polyak-\L{}ojasiewicz-like condition can be found in \cite{Bauschke:19}.

Using \cref{ass:PL-with-L-dependence}, one can prove the linear convergence rate as follows.
\begin{lemma}
    For the objective $f$ and the prox function $\vphi$, let $\{x^k\}_{k \in \N}$ be the sequence generated using constant step size $\tau \in (0, L^{-1}]$. Assume \cref{ass:solveability,ass:PL-with-L-dependence} hold. Then, defining $r := \frac{\theta(\tau)\eta}{\tau}$, it holds that
    \begin{equation}
        f(x^{k}) - f^{\min} \leq (1-r)^k (f(x^0) - f^{\min}),
    \end{equation}
    and $r \in (0,1]$.
\end{lemma}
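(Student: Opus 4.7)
The plan is to combine \cref{lm:sufficient-descent-BPGD} with the two parts of \cref{ass:PL-with-L-dependence} to obtain a one-step linear contraction of the optimality gap $f(x^k) - f^{\min}$, and then to iterate.

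Concretely, I first apply \cref{lm:sufficient-descent-BPGD} at the iterate $x^k \in \intr C$ with step size $\tau \in (0, L^{-1}]$, using that $x^{k+1} = (x^k)^+_\tau$ by construction, to get
$$f(x^{k+1}) \leq f(x^k) - \frac{1}{\tau} D_\vphi(x^k, x^{k+1}).$$
Next I chain the two estimates from \cref{ass:PL-with-L-dependence}: the scaling condition \eqref{eq:dependence on tau} applied at $x = x^k$ gives $D_\vphi(x^k, x^{k+1}) \geq \theta(\tau)\, D_\vphi(x^k, (x^k)^+_1)$, and the PL-type inequality \eqref{eq: Pl-cond} then gives $D_\vphi(x^k, (x^k)^+_1) \geq \eta\bigl(f(x^k) - f^{\min}\bigr)$. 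Substituting into the descent estimate and subtracting $f^{\min}$ from both sides produces
$$f(x^{k+1}) - f^{\min} \leq (1-r)\bigl(f(x^k) - f^{\min}\bigr), \qquad r = \frac{\theta(\tau)\eta}{\tau}.$$
Iterating this one-step recursion starting from $k=0$ delivers the claimed bound $f(x^k) - f^{\min} \leq (1-r)^k (f(x^0) - f^{\min})$.

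It remains to verify $r \in (0,1]$. Positivity is immediate since $\tau, \theta(\tau), \eta > 0$ by \cref{ass:PL-with-L-dependence}. For the upper bound, I would read $r \leq 1$ directly off the one-step inequality: since the left-hand side $f(x^{k+1}) - f^{\min}$ is nonnegative, the inequality forces $(1-r)\bigl(f(x^k) - f^{\min}\bigr) \geq 0$, so whenever $x^k$ is not already optimal we must have $1 - r \geq 0$ (the case $f(x^0) = f^{\min}$ is trivial by \cref{lm: fixed point property of BPGD}). I do not anticipate any serious obstacle here: the argument is the standard Polyak--\L{}ojasiewicz contraction, now transplanted to the Bregman geometry by means of the scaling condition \eqref{eq:dependence on tau}, which precisely compensates for the fact that the iterate uses step size $\tau$ while the PL-inequality is formulated at step size one.
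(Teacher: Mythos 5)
Your proposal is correct and follows essentially the same route as the paper's own proof: apply \cref{lm:sufficient-descent-BPGD}, chain \eqref{eq:dependence on tau} and \eqref{eq: Pl-cond} to obtain the one-step contraction $f(x^{k+1}) - f^{\min} \leq (1-r)(f(x^k) - f^{\min})$, and iterate. Your justification of $r \leq 1$ via nonnegativity of the optimality gap is a slightly more explicit phrasing of the paper's remark that the non-increasing gap forces $r \leq 1$, but the substance is identical.
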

\begin{proof}
    We give a short proof for completeness, adapting slightly from \cite{Bauschke:19}. By \cref{lm:sufficient-descent-BPGD} and \cref{ass:PL-with-L-dependence}, we have:
    \begin{equation}
        f(x^{k+1}) \leq f(x^k) - \frac{D_\vphi(x^k, x^{k+1})}{\tau} \leq f(x^k) - \frac{\theta(\tau)\eta}{\tau} (f(x^k)-f^{\min}).
    \end{equation}
    A short reformulation yields
    \begin{equation} \label{eq:fine-level-recursion}
        f(x^{k+1}) - f^{\min} \leq (1-r) (f(x^k)-f^{\min}).
    \end{equation}
    Since $r > 0$ and the sequence $\{f(x^k)-f^{\min}\}_{k \in \N}$ is non-increasing, $r \leq 1$ must hold. A recursion of \eqref{eq:fine-level-recursion} yields the statement.
\end{proof}

\section{Multilevel Bregman proximal gradient descent}\label{sec:MLBPGD-big-section}
For large-scale problems, multilevel optimization reduces dimensionality when far from the solution, where full high-resolution information is not yet critical. It does so by defining a coarse problem representation with significantly fewer variables, which is used to compute a descent direction for the finer level, thereby accelerating convergence.
\subsection{Overview of unconstrained multilevel optimization}\label{sec:unconstrained-ML}
We provide an overview of Nash's MGOPT framework, \cite{Nash:2000}, focusing on the two-grid cycle for updating $x^{k+1}$ from the current iterate $x^k$. This update involves either a search direction obtained from a coarse-grid model with fewer variables (coarse correction) or, when the coarse correction is ineffective, a standard local approximation defined on the fine grid (fine correction). We denote such an update iteration by $\rho: \R^n \to \R^n$. This approach is summarized in \cref{alg:Two-level-algorithm}, where $\rho$ is a general iteration update; later it will denote a BPGD step.

Let $n$ be the dimension of the full-problem, which we henceforth call the \emph{fine} dimension. We assume access to a convex coarse version of the fine objective $f$, denoted by $f_H$, defined on $\R^{n_H}$ with $n \gg n_H$.
Furthermore, we assume linear maps $R: \R^{n} \to \R^{n_H}$ (restriction)
and $P: \R^{n_H} \to \R^n$ (prolongation) are provided to transfer points between levels, typically via interpolation. We impose the standard \emph{variational property} (or Galerkin condition) $R = cP^\top$ for some positive scalar $c$, see \cite{SIAM_Multigrid}. In this paper we set $c = 1$ for simplicity. 
\begin{remark}
    This geometric approach, which assumes the problem has a "natural" coarse instance, common in variational problems derived from infinite-dimensional formulations, contrasts with algebraic methods, where the problem is evaluated solely on the fine grid, and coarse-grid representations are constructed using the restriction and prolongation mappings, cf. \cite{ang2023mgprox,Parpas:2017}. Importantly, the theoretical framework we present and expand upon here does not contradict an algebraic approach, which can still be employed if desired.
\end{remark}

\begin{algorithm}[H]
        \caption{Two level optimization}\label{alg:Two-level-algorithm}
        \DontPrintSemicolon
        \textbf{initialization: $x^0 \in \R^n$}\;
        \Repeat{a stopping rule is met.} {
          \If{condition to use coarse model is satisfied at $x^k$} {
            $x_H^k = Rx^k$ \;
            $x_H^{\min, k} = \argmin_{x \in \R^{n_H}} \psi^k(x)$ \tcc*{solve coarse model}
            $d_k = P(x^{\min, k}_H - x_H^k)$ \tcc*{compute descent direction }
            Find $\alpha_k > 0$ such that $f(x^k + \alpha_k d_k) \leq f(x^k)$ \tcc*{line search}
            $z^{k+1} = x^k + \alpha_k d_k$  \tcc*{coarse correction}
            Apply fine-grid iteration $x^{k+1} = \rho(z^{k+1}; f)$ \tcc*{post-smoothing}
          }\Else{
          Apply fine-grid iteration $x^{k+1} = \rho(x^{k}; f)$.
          }
          Increment $k \leftarrow k+1$.
          }
    \end{algorithm}

\paragraph{Coarse model.}
The core idea is to define the coarse model by linearly modifying the coarse function $f_H$. At each iteration $k \in \N$, it is constructed based on the current iterate $x^k$ as
\begin{equation} \label{eq:k-th-coarse-function}
    \psi^k(x) := f_H(x) + \la v^k, x - Rx^k \ra, \quad \text{with} \quad  v^k := R \df(x^k) - \df_H(Rx^k),
\end{equation}
to define the coarse problem
\begin{equation} \label{eq:coarse-model-objective}
    \argmin_{x \in \R^{n_H}} \psi^k(x).
\end{equation}
For the initial coarse-grid iterate $x_H^k = Rx^k$, the gradient of the coarse model satisfies the \emph{first-order coherence}
\begin{equation} \label{eq:first-order-coherency}
    \nabla \psi^k(x_H^k) = R\df(x^k),
\end{equation}
which ensures that the restriction of a critical point remains critical for the coarse problem.

The coarse model is designed to efficiently compute a descent direction $d^k$ for $f$ at the point $x^k$, making use of the lower-dimensional and thus computationally cheaper coarse variables. To this end, one solves \eqref{eq:coarse-model-objective} to find the minimizer $x_H^{\min,k}$, or more commonly, an approximate solution $x_H^{+,k}$ satisfying $\psi^k(x_H^{+,k}) < \psi^k(Rx^k)$. This approximate solution is often obtained via an iterative update rule similar to the post-smoothing step applied to the fine objective.

\subsection{Two-level BPGD}\label{sec:2l-BPGD}
Building upon the unconstrained multilevel framework, we now extend the method to handle convex constraints within a multilevel variant of Bregman proximal gradient descent (BPGD). To ease understanding, we start with the two-level scheme, highlighting the key concepts and summarizing the approach in \cref{alg:Two-level-BPGD}. The full multilevel scheme is discussed in \Cref{sec:MLBPGD}.
\subsubsection{Coarse model}\label{sec:2l-BPGD-coarse-model}
At iteration $k$, the coarse constrained problem is constructed similarly to the unconstrained setting, while ensuring consistency between the coarse and fine feasibility sets. Specifically, we define closed and convex sets $C_H^k \subseteq \mathbb{R}^{n_H}$,
referred to as \textit{coarse constraints},
which satisfy the \textit{coarse feasibility consistency property}
\begin{equation} \label{eq:coarse-constraint-set}
    C_{H}^k \subseteq \{w \in \R^{n_H}: x^k + P(w - x_H^k) \in C \},
\end{equation}
ensuring that every coarse-feasible point prolongs to a fine-feasible one.
The \textit{coarse minimization problem} at iteration $k$ is defined as
\begin{equation} \label{eq:k-th-coarse-problem}
    \min_{x \in C_{H}^k} \psi^k(x) \quad \psi^k \text{ as defined in } \eqref{eq:k-th-coarse-function}.
\end{equation}
Note that $Rx^k$ is trivially contained in $C_H^k$. The first-order coherence \eqref{eq:first-order-coherency} and variational property, paired with \eqref{eq:coarse-constraint-set}, extends the consistency of transferring critical points from the fine to the coarse level to the constrained setting.
\begin{lemma}\label{lm:transferability-of-criticality}
        If $x^k$ is a critical point of the fine objective $\{f(x): x \in C\}$, then $Rx^k$ is a critical point of $\{\psi^k(x): x \in C^k_H\}$.
\end{lemma}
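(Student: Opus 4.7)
The plan is to unpack what ``critical point'' means for a constrained convex problem as a first-order variational inequality, rewrite the coarse-level inequality using the first-order coherency identity \eqref{eq:first-order-coherency}, and then transport test directions from the coarse to the fine level via the prolongation $P$, exploiting the variational property $R = P^\top$ together with the constraint compatibility \eqref{eq:coarse-constraint-set}.

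Concretely, I would start by noting that since $f$ is convex and $C$ is closed convex, $x^k$ being a critical point of $\{f(x): x \in C\}$ is equivalent to the first-order optimality condition
\begin{equation*}
    \la \df(x^k), u - x^k \ra \geq 0 \quad \text{for all } u \in C.
\end{equation*}
Analogously, to show that $Rx^k$ is critical for $\{\psi^k(x): x \in C_H^k\}$, I need to verify that $\la \nabla \psi^k(Rx^k), w - Rx^k \ra \geq 0$ for every $w \in C_H^k$. The function $\psi^k$ is convex (as the sum of the convex $f_H$ and a linear term), so this first-order characterization is indeed equivalent to criticality.

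Next, I would apply the first-order coherency \eqref{eq:first-order-coherency}, which gives $\nabla \psi^k(Rx^k) = R\df(x^k)$. For any $w \in C_H^k$, the variational property $R = P^\top$ yields
\begin{equation*}
    \la \nabla \psi^k(Rx^k), w - Rx^k \ra = \la R \df(x^k), w - Rx^k \ra = \la \df(x^k), P(w - Rx^k) \ra.
\end{equation*}
Now I would set $u := x^k + P(w - Rx^k)$ and observe that $u \in C$ by the coarse feasibility assumption \eqref{eq:coarse-constraint-set}. Since $u - x^k = P(w - Rx^k)$, the fine-level optimality of $x^k$ gives $\la \df(x^k), P(w - Rx^k) \ra \geq 0$, which is exactly the desired coarse-level inequality.

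There is no real obstacle here; the proof is essentially bookkeeping. The only subtlety worth pointing out explicitly is that \eqref{eq:coarse-constraint-set} is precisely what makes the transported point $u$ feasible on the fine level, and the variational property $R = P^\top$ is precisely what turns the coarse gradient test direction into the corresponding fine gradient test direction. Once these two ingredients are in place, the assertion is immediate from \eqref{eq:first-order-coherency} and the first-order optimality of $x^k$ on $C$.
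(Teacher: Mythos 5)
Your proof is correct and follows the same route as the paper's own argument: first-order coherency plus the variational property $R = P^\top$ convert the coarse variational inequality into the fine one, and the constraint compatibility \eqref{eq:coarse-constraint-set} guarantees feasibility of the transported test point. Your write-up is simply a more explicit version of the paper's one-line computation.
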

\begin{proof}
    The variational property of the transfer operators and the first-order coherence yield
    \begin{equation}
        \la \nabla \psi(Rx^k), w-Rx^k \ra = \la \df(x^k), P(w-Rx^k).
    \end{equation}
    The first-order optimality condition guarantees that $x^k$ is a critical point of $f$ over $C$ iff
    \begin{equation}
        \la \nabla f(x^k), d \ra \geq 0 
    \end{equation}
    for all feasible directions such that $x^k + d \in C$. By the definition of the coarse constraints \eqref{eq:coarse-constraint-set}, $w \in R^{n_H}$ is chosen such that $x^k + P(w-Rx^k) \in C$, completing the proof.
\end{proof}

We employ BPGD schemes both as a post-smoothing step on the fine level and to approximately solve the coarse problem 
$\psi^k$ see \eqref{eq:k-th-coarse-problem}, especially when the dimension 
$n_H$ is too large for fast convergence. Specifically, we perform the update:
\begin{equation} \label{eq:BPGD-coarse-level}
    \argmin_{u \in C_H^k} \Phi(u; x, \tau_H, \psi^k, \vphi_H) : = \argmin_{u \in C_H^k} \{ \tau_H \la \nabla \psi^k(x) - \dvphi_H(x), u \ra + \vphi_H(u) \}.
\end{equation}
Thus, the same challenges in efficiently applying BPGD updates arise on the coarse level as on the fine level, primarily in selecting an appropriate reference function $\vphi_H: \R^{n_H} \to (-\infty, \infty]$ tailored to the coarse objective $\psi^k$. Due to the linearity of the Bregman divergence, it suffices to focus on choosing $\vphi_H$ to suit $f_H$.
In summary, analogously to \cref{ass:solveability}, we choose a reference function $\vphi_H$ such that
\begin{assumptionp}{C}[solvability of coarse problem] \label{assump:solvability-coarse-2l}
    \begin{enumerate}
    \item $f_H$ is $L_H$-smooth relative to $\vphi_H$ on $C_H^k$ for all $k \in \N$,
    \item the subproblem \eqref{eq:BPGD-coarse-level} always has a solution on $C_H^k$ that is a singleton and efficiently computable.
\end{enumerate}
\end{assumptionp}


\begin{algorithm}[h]
\DontPrintSemicolon
\textbf{initialization: $x^0 \in \intr C, k = 0$}\;
\Repeat{a stopping rule is met} {
  \If{condition to use coarse model is satisfied at $x^k$} {
    $x_H^{0,k} = Rx^k$ \;
    \For{$i \in [1, \cdots, m]$} {
    $x_H^{i,k} = \argmin_{u \in C_H^k} \Phi(u; x_H^{i-1,k}, \tau_H, \psi^k, \vphi_H)$ \tcc*{\small solve coarse problem}
    }
    $d_k = P(x^{m, k}_H - x_H^{0,k})$ \tcc*{\small descent direction }
    Find $\alpha_k \in (0,1]$ such that $f(x^k + \alpha_k d_k) \leq f(x^k)$ \tcc*{line search}
    $z^{k+1} = x^k + \alpha_k d_k$  \tcc*{\small coarse correction}
    $x^{k+1} = \argmin_{u \in C} \Phi(u; z^{k+1}, \tau, f, \vphi)$ \tcc*{\small post-smoothing}
  }\Else{
  $x^{k+1} = \argmin_{u \in C} \Phi(u; x^k, \tau, f, \vphi)$ \tcc*{\small fine correction}
  }
  Increment $k \leftarrow k+1$
  }
\caption{Two-level BPGD}\label{alg:Two-level-BPGD}
\end{algorithm}

\begin{figure}
    \centering
    \makebox[\linewidth][r]{
        \includegraphics[width=0.8\linewidth]{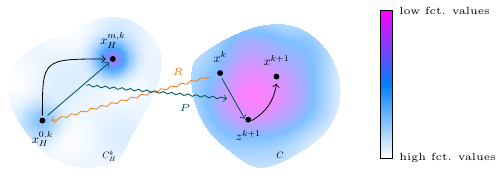}
    }
    \caption{Flowchart of \cref{alg:Two-level-BPGD}. Cooler and lighter colors refer to bigger function values.}
    \label{fig:alg-flowchart}
\end{figure}

A full multilevel scheme applies \cref{alg:Two-level-BPGD} recursively, where $n_H$ takes on the role of the fine dimension and $\psi^k$ are the fine objectives of their respective iterations. We direct the reader to \Cref{sec:MLBPGD} for a detailed description of the V-cycle variant.

\subsubsection{Coarse correction condition}\label{sec:2l-BPGD-CCC}

As shown in \cref{lm:transferability-of-criticality}, if the fine iterate $x^k$ is stationary for the fine objective $f$, then its restriction $Rx^k$ is also stationary for the coarse model $\psi^k$. To ensure that the algorithm makes full use of the problem's dimensionality near such points, we deliberately avoid relying on coarse-level information in their vicinity. However, the converse does not hold: a nonstationary fine-level iterate may yield a stationary point on the coarse level. In such cases, constructing and solving the coarse model incurs unnecessary computational cost, and should be avoided. To address this, we employ a coarse correction condition that prevents ineffective coarsening, following the approach of \cite{Gratton:2008,Conn2000}.

The constants $\kappa$ and $\epsilon$ represent the tolerance on the first-order optimality conditions.
\begin{condition}[Coarse correction criteria]
    The iterate $x^k$ triggers a coarse correction step if the following holds
    \begin{equation}\label{eq:constrained-CCC}
        \abs{ \min_{x_H^{0,k} + d \in C^k_{H}} \la \nabla \psi^k(x_H^{0,k}), d \ra} \geq \kappa \abs{\min_{x^k + d \in C} \la \df(x^k), d \ra} \geq \epsilon
    \end{equation}
    with $\kappa, \epsilon \in (0,1)$, and
    \begin{equation} \label{eq:last-iterate-CCC}
        D_\vphi(x^k ,\widetilde{x}) \geq \epsilon_x,
    \end{equation}
    where $\epsilon_x > 0$ and $\widetilde{x}$ is the last iterate that triggered a multilevel step. 
\end{condition}

The condition \eqref{eq:last-iterate-CCC} prevents a coarse correction if the current iterate is very close to $\widetilde{x}$, since a new coarse correction would lead to values similar to the last step. The nearness of the points is measured in Bregman distance in accordance with the non-Euclidean setting of our problem. 

The criticality measure \eqref{eq:constrained-CCC} extends standard criteria from the unconstrained setting, where a coarse correction is triggered if $\norm{\nabla \psi^k(x_H^k)}_2 \geq \kappa  \norm{\df{(x^k)}}_2$, cf. \cite{Gratton:2008}. However, \eqref{eq:constrained-CCC} is not numerically viable, as it requires minimizing over all feasible directions. Depending on the structure of the feasible set, this condition can be relaxed. In practice, the unconstrained criterion often serves as a reliable proxy and is computationally more efficient. Alternative coarse criticality measures involving Euclidean projections have also been proposed, though these tend to be costly when dealing with complex or nontrivial constraints.

\subsubsection{Transfer operators}\label{sec:2l-BPGD-transfer-ops}
Constructing linear operators $P$ and $R$ to transfer points between levels has been extensively studied in multigrid methods for PDEs \cite{Hackbusch:1985,SIAM_Multigrid,Trottenberg:2001}, and many of these ideas extend naturally to multilevel optimization. Classically, the prolongation operator $P$ is defined by interpolation with weights tailored to the underlying problem's structure. A simple choice is linear interpolation via convolution with the (normalized) kernel
\begin{equation} \label{eq:1D-kernel}
    K_{1\text{D}} := \frac{1}{4} \begin{bmatrix} 1 \quad 2 \quad 1 \end{bmatrix}.
\end{equation}
The corresponding restriction operator $R = P^\top$ then maps from a fine grid of size $n$ to a coarse grid of size $\frac{n}{2}$ for even $n$ and $\frac{n-1}{2}$ for odd $n$. While edge asymmetry disrupts the balance of the weights for even $n$, the transfer operators are perfectly balanced for odd dimensional input and as such sum preserving in that case. Additionally, $P$ has full rank and a trivial null space.

In this work, we focus primarily on square domains with dimensions $n = (2^m-1)^2$ and $n_H = (2^{m-1}-1)^2$ for fine and coarse levels, respectively. In such cases, bilinear interpolation is a natural choice. It can be implemented via a 2D convolution kernel, for instance,
\begin{equation} \label{eq:2D-kernel}
    K_{2\text{D}} := K_{1\text{D}} \otimes K_{1\text{D}} = \frac{1}{16} \begin{bmatrix}
        1 \quad 2 \quad 1 \\ 2 \quad 4\quad 2 \\ 1 \quad 2 \quad 1 
    \end{bmatrix}.
\end{equation}
This kernel is also sum-preserving and yields a prolongation operator $P$ with full rank. Higher-order interpolation schemes, such as bicubic or spline-based interpolators, are also common and may offer improved accuracy. However, the selection of transfer operators in the context of nonlinear optimization problems is highly problem-dependent, just as in PDE-specific multigrid settings. Our future work will also focus on the impact of refined choices of the the grid transfer mappings $P$ and $R$. The present form of $R$ may be viewed as a primitive low-pass operator from the viewpoint of image signal processing, whose role in the present context of optimization differs, however. We conjecture that suitable adaptive choices depending on the objective function are beneficial.

\subsubsection{Feasibility of the two-level BPGD}\label{sec:2l-BPGD-well-defined}
By ensuring the consistency between the coarse and fine constraints, we have $x^k + \alpha^k d^k \in C$ for any $\alpha^k \in (0,1]$, given that $x^k + d^k \in C$ and $C$ is convex. Moreover, since $x^k$ is in the (relative) interior of $C$ and $f$ is continuous, there exists a point $z^{k+1} \in \intr C$ that can be reached by a sufficiently small step along the direction $d^k$. This ensures that both the post-smoothing step and the two-level BPGD scheme, as discribed in \cref{alg:Two-level-BPGD}, are well-defined.

It remains to be shown that the descent direction $d^k$, computed via coarse correction, indeed constitutes a descent direction for the fine-level objective $f$.
\begin{proposition}[$d^k$ is a descent direction] \label{prop:dk-is-a-descent-direction}
    Let $f_H$ be convex and $L_H$-smooth relative to a convex function $\vphi_H$. Suppose that in the 
    $k$-th iteration of \cref{alg:Two-level-BPGD}, the coarse update direction satisfies $x_H^{m,k}-x_H^{0,k} \in \rg(R)$. Then, the coarse correction direction $d^k := P(x_H^{m,k} - x_H^{0,k})$ is a descent direction of $f$ at $x^k$. 
\end{proposition}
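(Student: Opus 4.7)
The plan is to transport the inner product $\langle \nabla f(x^k), d_k \rangle$ from the fine grid onto the coarse grid, and then close the argument using convexity of $\psi^k$ together with the monotonicity of the inner BPGD iterates.

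First, using the definition $d_k = P(x_H^{m,k} - x_H^{0,k})$ and the variational property $R = P^\top$ (with $c = 1$), I would write
\[
\langle \nabla f(x^k), d_k \rangle = \langle P^\top \nabla f(x^k), x_H^{m,k} - x_H^{0,k} \rangle = \langle R \nabla f(x^k), x_H^{m,k} - x_H^{0,k} \rangle.
\]
The first-order coherency \eqref{eq:first-order-coherency}, which is baked into the construction of $\psi^k$ through the correction term $v^k = R\nabla f(x^k) - \nabla f_H(Rx^k)$, then allows me to replace $R\nabla f(x^k)$ by $\nabla \psi^k(x_H^{0,k})$, yielding
\[
\langle \nabla f(x^k), d_k \rangle = \langle \nabla \psi^k(x_H^{0,k}), x_H^{m,k} - x_H^{0,k} \rangle.
\]

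Next I would exploit convexity of $\psi^k$, which holds because $\psi^k$ differs from the convex $f_H$ only by an affine term. The subgradient inequality gives
\[
\langle \nabla \psi^k(x_H^{0,k}), x_H^{m,k} - x_H^{0,k} \rangle \leq \psi^k(x_H^{m,k}) - \psi^k(x_H^{0,k}).
\]
Finally, the coarse iterates $x_H^{1,k}, \dots, x_H^{m,k}$ are produced by BPGD applied to $\psi^k$ relative to $\vphi_H$ with step size $\tau_H \in (0, L_H^{-1}]$. Under \cref{assump:solvability-coarse-2l}, \cref{lm:sufficient-descent-BPGD} applied to $\psi^k$ yields $\psi^k(x_H^{i,k}) \leq \psi^k(x_H^{i-1,k}) - \tau_H^{-1} D_{\vphi_H}(x_H^{i-1,k}, x_H^{i,k})$ for every $i$, and telescoping gives $\psi^k(x_H^{m,k}) \leq \psi^k(x_H^{0,k})$. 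Combining the three steps proves $\langle \nabla f(x^k), d_k \rangle \leq 0$.

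The main subtlety is upgrading this non-ascent statement to strict descent, which is what the terminology ``descent direction'' really requires. The natural place to do so is via the coarse correction condition: when \eqref{eq:constrained-CCC} is triggered at $x^k$, the point $x_H^{0,k}$ is not a minimizer of $\psi^k$ on $C_H^k$, and the first-order characterization underlying \cref{lm: fixed point property of BPGD} then forces $x_H^{1,k} \neq x_H^{0,k}$, so $D_{\vphi_H}(x_H^{0,k}, x_H^{1,k}) > 0$ and the first inequality of the telescoped descent becomes strict. As for the hypothesis $x_H^{m,k} - x_H^{0,k} \in \rg(R)$, it is automatic under the full-rank prolongation operators constructed in \cref{sec:2l-BPGD-transfer-ops}, since in that case $R = P^\top$ is surjective; it enters the argument only implicitly, by guaranteeing that the coarse displacement corresponds to a well-defined direction in $\rg(P)$ after prolongation.
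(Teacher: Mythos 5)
Your proof is correct and follows essentially the same route as the paper: first-order coherency to pull the inner product onto the coarse level, convexity of $\psi^k$ (nonnegativity of $D_{\psi^k}$) for the first inequality, and the telescoped sufficient-descent property of BPGD (\cref{lm:sufficient-descent-BPGD}) for the second, with strictness coming from $x_H^{m,k}\neq x_H^{0,k}$. Your tracing of the strict inequality back to the coarse correction condition is a slightly more explicit justification than the paper's one-line appeal to $x_H^{m,k}\neq x_H^{0,k}$ and strict convexity of $\vphi_H$, but the argument is the same.
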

\begin{proof}
    Formally, we show that
    \begin{equation}
        \la \df(x^k), d^k \ra < 0.
    \end{equation}
    This is a straightforward computation. We start by employing the first-order coherence condition.
    \begin{align}
        \la \df(x^k), P(x_H^{m,k}-x_H^{0,k}) \ra &= \la \nabla \psi^k(x^{0,k}_H), x^{m,k}_H - x^{0,k}_H \ra \\
        &\leq \psi^k(x^{m,k}_H) - \psi^k(x^{0,k}_H) \\ &\leq -\sum_{i = 1}^m \frac{D_{\vphi_H}(x^{i-1,k}, x^{i,k})}{\tau_H} < 0. \label{eq:coarse-sufficient-descent-Armijo}
    \end{align}
    The first inequality is a consequence of the nonnegativity of $D_{\psi^k}(x^{m,k}_H,x^{0,k}_H)$, ensured by the convexity of $f_H$. The second follows from successive applications of the sufficient descent property of BPGD (\cref{lm:sufficient-descent-BPGD}). The strict inequality is guaranteed by $x_H^{m,k} \neq x_H^{0,k}$ and the strict convexity of $\vphi_H$. 
\end{proof}
We conclude the discussion on the feasibility of the two-level variant of ML-BPGD by showing that it satisfies the following fixed-point property.
\begin{lemma}[Fixed point property of \cref{alg:Two-level-BPGD}] \label{lm:fixed_point_property-MLBPGD}
    If $x^k$ a critical point of $f$, then \cref{alg:Two-level-BPGD} yields $x^{k+1} = x^k$.
\end{lemma}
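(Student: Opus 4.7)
The plan is to show that when $x^k$ is a critical point of $f$ on $C$, the coarse correction trigger in \cref{alg:Two-level-BPGD} necessarily fails, so the algorithm falls through to the \textbf{else} branch, which is a single BPGD step on the fine level; then the fixed-point property in \cref{lm: fixed point property of BPGD} finishes the argument.

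The first step is to translate criticality into a statement about the coarse correction condition \eqref{eq:constrained-CCC}. Since $f$ is convex on $C$ and $x^k \in \intr C$ is critical, the first-order optimality condition for the constrained problem gives $\la \df(x^k), u - x^k \ra \geq 0$ for every $u \in C$. Taking $d = u - x^k$, this means
\begin{equation*}
    \min_{x^k + d \in C} \la \df(x^k), d \ra = 0,
\end{equation*}
the minimum being attained at $d = 0$. Consequently $\abs{\min_{x^k + d \in C}\la \df(x^k), d\ra} = 0 < \epsilon$, so the right-hand inequality in \eqref{eq:constrained-CCC} is violated regardless of the value of the coarse criticality measure. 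The coarse correction therefore does not trigger.

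The algorithm thus enters the \textbf{else} branch and applies a single BPGD update, $x^{k+1} = \argmin_{C} \Phi(x^k; f, \tau) = (x^k)^+_\tau$. Since $x^k$ is a critical point of the convex problem $\min_{x \in C} f(x)$, it is a minimizer of $f$ over $C$, so \cref{lm: fixed point property of BPGD} applies and yields $(x^k)^+_\tau = x^k$. Combining the two observations gives $x^{k+1} = x^k$ as claimed.

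There is no real obstacle here: the statement is essentially a compatibility check between the coarse correction gatekeeper and the fine-level fixed-point property already proved for BPGD. The only point to be careful about is invoking the correct implication direction in \eqref{eq:constrained-CCC} — the algorithm uses both inequalities as a conjunctive trigger, and violation of the right-hand one alone is enough to skip the multilevel branch; the second criterion \eqref{eq:last-iterate-CCC} plays no role in the argument.
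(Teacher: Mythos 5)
Your proof is correct, but it takes a genuinely different route from the paper's. You argue that at a critical point the fine-level criticality measure $\abs{\min_{x^k+d\in C}\la\df(x^k),d\ra}$ vanishes, so the threshold $\kappa\abs{\cdot}\geq\epsilon$ in \eqref{eq:constrained-CCC} fails, the coarse branch is never entered, and the result follows from a single application of \cref{lm: fixed point property of BPGD} on the fine level. The paper instead proves the stronger statement that the conclusion holds \emph{on either branch}: by \cref{lm:transferability-of-criticality} the restricted point $x_H^{0,k}$ is critical for the coarse model $\psi^k$, so successive applications of \cref{lm: fixed point property of BPGD} on the coarse level give $x_H^{m,k}=x_H^{0,k}$, hence $d_k=0$, $z^{k+1}=x^k$, and the post-smoothing step fixes $x^k$ again. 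The trade-off is robustness versus brevity: your argument is shorter but hinges on the exact form of the trigger condition, which the paper itself says is often replaced in practice by the unconstrained proxy $\norm{\nabla\psi^k(x_H^k)}_2\geq\kappa\norm{\df(x^k)}_2$ (and \cref{alg:Two-level-BPGD} only states a generic ``condition to use coarse model''); the paper's proof is agnostic to how the branch is selected and is the version that generalizes directly to the recursive multilevel setting. Both are valid; it would be worth noting in your write-up that your argument additionally assumes the trigger is exactly Condition \eqref{eq:constrained-CCC}.
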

\begin{proof}
    By \cref{lm:transferability-of-criticality}, $x_H^{0,k}$ is a critical point of $\psi^k$ since $x^k$ is a critical point of $f$. Successive use of \cref{lm: fixed point property of BPGD} yields $x^{m,k}_H = x^{0,k}_H$. Thus, the direction $d^k$ vanishes and the equality $x^{k+1} = x^k$ holds by the same \cref{lm: fixed point property of BPGD}.
\end{proof}

\subsubsection{Convergence}\label{sec:2l-BPGD-convergence}
Using an Armijo-based backtracking approach to obtain $z^k$ allows us to quantify the descent of a coarse step.
Given a descent direction $d$ of $f$ at a point $x$ and constants $\bar \alpha > 0$, $\beta, \sigma \in (0,1)$, the Armijo line search outputs $\alpha := \beta^m \bar \alpha$, where $m$ is the smallest nonnegative integer satisfying
\begin{equation} \label{eq:Armijo-line-search}
    f(x + \alpha d ) \leq f(x) + \sigma \alpha \la \df(x), d \ra.
\end{equation}
Taking $x = x^{k-1}$ implies
\begin{align}
    f(z^k) &\leq f(x^{k-1}) + \sigma \alpha_{k-1} \la \df(x^{k-1}), d_{k-1} \ra \\
    &\overset{\eqref{eq:coarse-sufficient-descent-Armijo}}{\leq }f(x^{k-1}) - \sigma \alpha_{k-1} \sum_{i = 1}^m \frac{D_{\vphi_H}(x^{i-1,k-1}, x^{i,k-1})}{\tau_H}. \label{eq:Armijo-descent}
\end{align}
This allows us to relate the descent of the coarse correction to the sufficient descent obtained by solving the coarse problem. We now state and prove our main result.

\begin{theorem}\label{thm:2l-convergence}
Let $f$ satisfy \cref{ass:solveability,ass:PL-with-L-dependence} with constants $L, \eta > 0$, and scaling function $\theta$, and let $f_H$ satisfy \cref{assump:solvability-coarse-2l} with $L_H > 0$. For constant step sizes $\tau \in (0, L^{-1}]$ and $\tau_H \in (0, L^{-1}_H]$ the following holds. Employing an Armijo backtracking line search \eqref{eq:Armijo-line-search} for the coarse corrections to obtain $z^{k+1}$, the function values of the iterates $\{x^k\}_{k \in \N}$ of \cref{alg:Two-level-BPGD} converge. More precisely,
\begin{equation}
    f(x^k) - f^{\min} \leq \left(1- r \right)^k (f(x^0) - f^{\min}) - \sum_{j = 0}^{k-1} (1-r)^{k-j} \rho^j
\end{equation}
with 
\begin{equation}
    \rho^j = 
    \begin{cases}
        \sigma \alpha_j \sum_{i = 1}^m \frac{D_{\vphi_H}(x^{i-1,j-1}, x^{i,j-1})}{\tau_H}, &j \text{ triggers a coarse correction} \\
        0, &\text{otherwise}
    \end{cases}
\end{equation}
and constant $r := \frac{\theta(\tau) \eta}{\tau}$.
\end{theorem}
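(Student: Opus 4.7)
The plan is to establish a unified one-step recursion
\begin{equation*}
    a_{j+1} \leq (1-r)\,a_j - (1-r)\,\rho_H^j, \quad \text{where } a_j := f(x^j) - f^{\min},
\end{equation*}
that is valid for every iteration $j \in \{0,\ldots,k-1\}$, under the convention that $\rho_H^j = 0$ whenever iteration $j$ does not trigger a coarse correction. Once this is in place, iterating it $k$ times yields the telescoping bound in the statement. I split the derivation of this recursion into the two branches of \cref{alg:Two-level-BPGD}.

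For the non-coarse branch, the next iterate is $x^{j+1} = \argmin_C \Phi(x^j; f, \tau)$. Chaining \cref{lm:sufficient-descent-BPGD} with the scaling bound \eqref{eq:dependence on tau} and the PL-like inequality \eqref{eq: Pl-cond} of \cref{ass:PL-with-L-dependence} gives
\begin{equation*}
    f(x^{j+1}) \leq f(x^j) - \frac{D_\vphi(x^j, x^{j+1})}{\tau} \leq f(x^j) - \frac{\theta(\tau)\eta}{\tau}\bigl(f(x^j) - f^{\min}\bigr),
\end{equation*}
which is exactly the target recursion with $\rho_H^j = 0$ and $r = \theta(\tau)\eta/\tau$.

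For the coarse branch, the Armijo inequality \eqref{eq:Armijo-descent} directly produces $f(z^{j+1}) \leq f(x^j) - \rho_H^j$ with $\rho_H^j$ as defined in the theorem. The post-smoothing step $x^{j+1} = \argmin_C \Phi(z^{j+1}; f, \tau)$ is applied to the interior point $z^{j+1}$; well-posedness and feasibility here follow from the constraint-consistency property \eqref{eq:coarse-constraint-set} and the argument of \cref{sec:2l-BPGD-well-defined}. Running the same chain of inequalities as in the previous paragraph, but anchored at $z^{j+1}$ in place of $x^j$, yields $a_{j+1} \leq (1-r)\bigl(f(z^{j+1}) - f^{\min}\bigr)$, and substituting the Armijo bound recovers the unified recursion.

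The remainder is bookkeeping: unroll the recursion to obtain the closed-form bound, and note that $r \in (0,1]$ follows from the same monotonicity argument used in the fine-level proof of \cref{sec:preliminaries}, since both branches are non-increasing in $f$. The one technical point requiring care is that \cref{ass:PL-with-L-dependence} must be invokable at both $x^j$ and $z^{j+1}$, so both must lie in $\intr C$; this is precisely what the feasibility analysis of \cref{sec:2l-BPGD-well-defined} guarantees, and it is the step where the constraint-consistent construction of $C_H^k$ pays off.
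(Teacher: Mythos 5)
Your proposal is correct and follows essentially the same route as the paper's proof: anchor the sufficient-descent/PL chain at the post-smoothing input ($z^{j+1}$ in the coarse branch, $x^j$ otherwise) to get $a_{j+1}\leq(1-r)(f(z^{j+1})-f^{\min})$, substitute the Armijo bound \eqref{eq:Armijo-descent}, and unroll the resulting recursion. Your explicit remark that \cref{ass:PL-with-L-dependence} must be invoked at $z^{j+1}\in\intr C$, justified by the feasibility analysis of \cref{sec:2l-BPGD-well-defined}, is a point the paper leaves implicit.
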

\begin{proof}
Let the iterate $k \in \N$ satisfy a coarse correction condition, then
    \begin{align} \label{eq:2l-sufficient-descent}
        f(x^k) &\leq f(z^k) - \frac{1}{\tau} D_\vphi(z^k, x^k) \\ &\leq f(z^k) - \frac{1}{\tau} \theta(\tau) \eta (f(z^k) - f^{\min}).
    \end{align}
    Subtracting $f^{\min}$ from both sides yields
    \begin{equation} \label{eq:convergence-coarse-step}
        f(x^k) - f^{\min} \leq (1-r) (f(z^{k}) - f^{\min}) \overset{\eqref{eq:Armijo-descent}}{\leq} (1-r) (f(x^{k-1}) - f^{\min}) - (1-r)\rho_H^{k-1}.
    \end{equation}
    Now, when $k$ does not trigger a coarse correction, then $f(x^k) \leq f(x^{k-1}) - r(f(x^k) - f(x^{\min}))$ holds analogously to  \eqref{eq:2l-sufficient-descent}. 
    Applying this relation and \eqref{eq:convergence-coarse-step} recursively proves the result.
\end{proof}
\subsection{Multilevel BPGD}\label{sec:MLBPGD}
We extend the multilevel BPGD to multiple coarse level via a recursive approach. This section only expands the notation, setup and assumptions required for the well-definedness of a ML-BPGD beyond one coarse level. We remark at the end of the section on how the theoretical results generalize to this extended setting.

Let $\mc{L}$ denote the coarsest level and $\{n_\ell\}_{\ell \in [\mc{L}]} \subset \N$ decreasing dimensions, with $n =: n_0 \gg n_1 \gg n_2 \gg \dots \gg n_{\mc{L}}$. Analogous to the special case introduced in \Cref{sec:2l-BPGD}, we assume we have access to $\mc{L}$  coarse convex versions $\{f_\ell:\R^{n_\ell} \to (-\infty, \infty]\}_{\ell \in [\mc{L}]}$ of the fine objective $f$. Restriction maps $R_\ell: \R^{n_\ell} \to \R^{n_{\ell + 1}}$ and their corresponding prolongations $P_{\ell} = R_\ell{^\top}$ are also provided. The multilevel V-cycle variant of the algorithm is summarized in \cref{alg:ML-BPGD} with
the $\ell$-th coarse model at the $k$-th iteration given by
\begin{equation} \label{eq:coarse-model-ML}
    \psi_{\ell}^k(x) := f_{\ell}(x) + \la v_{\ell}^k, x - x_{\ell}^{0,k} \ra, \qquad v_{\ell}^k = R_{\ell}\nabla \psi_{\ell-1}^k(x_{\ell - 1}^{m_{\ell-1},k}) - \df_{\ell}(x_\ell^{0,k}),
\end{equation}
which satisfies the first-order coherence condition
\begin{equation}
    \nabla \psi_{\ell}^k(x_\ell^{0,k}) =  R_{\ell}\nabla \psi_{\ell-1}^k(x_{\ell - 1}^{m_{\ell-1},k})
\end{equation}
on each level.

\begin{algorithm}[h]
\DontPrintSemicolon
\textbf{initialization: $x^0 \in \intr C$, $k \in \N$}\;
\Repeat{a stopping rule is met} {
  \For{$\ell = 0, \dots, \mc{L}-1$}{
  \If{condition to use coarse model is satisfied at $x^{m_\ell,k}_{\ell}$} {
    $x_{\ell+1}^{0,k} = R_\ell x^{m_\ell,k}_{\ell}$ \;
    \For{$i \in [1, \cdots, m_{\ell+1}]$} {
    $x_{\ell+1}^{i,k} = \argmin_{u \in C_{\ell+1}^k} \Phi(u; x_{\ell+1}^{i-1,k}, \tau_{\ell+1}, \psi_{{\ell+1}}^k, \vphi_{{\ell+1}}^k)$
    }
  }}\Else{
  $\mc{L} = \ell$
  }
  \If{$\mc{L} > 0$}{
  $x_\mc{L}^{k+1} = x_\mc{L}^{m_{\mc{L}},k}$ \;
  \For{$\ell = \mc{L}-1, \dots, 0$}{
    $d_{\ell}^k = P_\ell (x_{\ell+1}^{k+1} - x_{\ell+1}^{0,k})$ \;
    Find $\alpha_{\ell}^k > 0$ such that $\psi_{\ell}^k(x^{m_\ell,k}_{\ell} + \alpha_{\ell}^k d_{\ell}^k) \leq \psi_{\ell}^k(x^{m_\ell,k}_{\ell})$ \;
    $z_\ell^{k+1} = x^{m_\ell,k}_\ell + \alpha_{\ell}^k d_{\ell}^k$ \;
    $x^{k+1}_\ell = \argmin_{u \in C_\ell^k} \Phi(u; z_{\ell}^{k+1}, \tau_\ell, \psi_{\ell}^k, \vphi_\ell^k)$
  }
  }
  \Else{
  $x^{k+1} = \argmin_{u \in C} \Phi(u; x^k, \tau, f, \vphi)$
  }
  Increment $k \leftarrow k+1$
}
\caption{Multilevel BPGD}\label{alg:ML-BPGD}
\end{algorithm}

We check at the $(\ell-1)$-th level if $x_{\ell-1}^{m_{\ell-1}, k}$, the current best approximation of the coarse model $\psi_{\ell-1}^k$, satisfies the following coarse correction condition
\begin{condition}
    We ease notation by defining
    \begin{equation}
        \chi_\ell^{i,k} = \abs{ \min_{x_\ell^{i,k} + d \in C_\ell^k} \la \nabla \psi_\ell^k(x_\ell^{i,k}), d \ra}.
    \end{equation}
    The iterate $x_{\ell-1}^{m_{\ell-1},k}$ triggers a coarse correction step to be computed on the $\ell$-th level if the following holds
    \begin{equation}
        \chi_\ell^{0,k} \geq \kappa_\ell \chi_{\ell-1}^{m_{\ell-1},k}, \quad \chi_{\ell-1}^{m_{\ell-1},k} \geq \epsilon_\ell
    \end{equation}
    for $\kappa_\ell, \epsilon_\ell \in (0,1)$, and
\begin{equation}
    D_\vphi(x^{m_{\ell-1},k}_{\ell-1} ,\widetilde{x}_{\ell-1}) \geq \epsilon_x,
\end{equation}
where $\epsilon_x > 0$, and $\widetilde{x}_{\ell-1}$ is the last iterate that triggered a multilevel step on the $(\ell-1)$-th level.
\end{condition}
If so, we solve, or rather approximate using $m_{\ell}$ BPGD iterates, the coarse convex problem
\begin{equation}
    \min_{x \in C_{\ell}^k} \psi_{\ell}^k(x), \quad C_{\ell}^k \subseteq \{w \in \R^{n_\ell}: x_{\ell-1}^{m_{\ell-1}, k} + P_{\ell-1}(w - x_{\ell}^{0,k}) \in C_{\ell-1}^k \}.
\end{equation}
\begin{remark}[Notation] To have a consistent notation, we identify the $0$-th level with the finest level, obtaining the identities $\psi_{0}^k \equiv f$ for all $k \in \N$, with its constraint set $C_0^k = C$, and $x_0^{m_0,k} = x_{0}^{i,k}=x^k$ for all $i \in \N$.
    
\end{remark}
The considerations for the well-definedness and solvability of the two-level BPGD extend to the multilevel case. For each level, we choose a strictly convex reference function $\vphi_\ell$ to write the multilevel assumptions
\begin{assumptionp}{D}\label{assump:solvability-coarse-ml}
    \begin{enumerate}
    \item[~] 
    \item $f_\ell$ is $L_\ell$-smooth relative to $\vphi_\ell$ on $C_\ell^k$ for all $k$
    \item the subproblem
    \begin{equation}
        \argmin_{u \in C_\ell^k} \Phi(u; x, \tau_\ell, \psi_{\ell}^k, \vphi_\ell)
    \end{equation}
    always has a solution on $C_\ell^k$ that is a singleton and is efficiently computable.
\end{enumerate}
\end{assumptionp}
Note that one can choose $\vphi_\ell$ to vary at each iteration to best match the constraints $C_\ell^k$, which change according to $k \in \N$, as illustrated in \cref{alg:ML-BPGD}. To simplify the already convoluted notation, we only provide explicit analysis for the case of a uniform $\vphi_\ell$ over all iterations. This yields no significant changes in the convergence theory or behavior of the algorithm apart from an additional superscript.

\subsubsection{Well-defined algorithm and convergence in the multilevel case}

The results established in \Cref{sec:2l-BPGD-well-defined,sec:2l-BPGD-convergence} for the two-level setting extend naturally to the multilevel case. Instead of repeating proofs, we briefly outline the generalization of key properties.

\paragraph{Fixed point property} 
The fixed point property from \cref{lm:fixed_point_property-MLBPGD} extends directly. Specifically, all points $x_{\ell}^{0,k}$ remain critical for $\psi_{\ell}^k$ whenever $x^k$ is a critical point of $f$, see \cref{lm:transferability-of-criticality}. The fixed point property of \cref{alg:ML-BPGD} thus follows as a natural consequence of the fixed point property of the BPGD (\cref{lm: fixed point property of BPGD}).

\paragraph{Descent direction} 
It remains true that $d_0^k$ is a descent direction for $f$ at $x^k$, see \cref{prop:dk-is-a-descent-direction}. The key observation is that the descent property is preserved recursively across levels, starting with 
\begin{equation}
    \la \nabla \psi^k_{\mc{L}-1}(x_{\mc{L}-1. k}^{m_{\mc{L}-1}}), d_{\mc{L}-1}^k \ra \leq - \frac{1}{\tau_{\mc{L}}}\sum_{i = 1}^{m_\mc{L}} D_{\vphi_{\mc{L}}}(x_{\mc{L}}^{i-1,k}, x_{\mc{L}}^{i,k}). 
\end{equation}
The hierarchical structure allows us to propagate the descent property across  levels, provided that the Armijo backtracking condition is enforced at all levels. The same arguments as in the two-level case then yield:
\begin{align}
    \la \df(x^k), d_{0}^{k} \ra &\leq \psi_{1}^k(x_1^{m_1,k}) - \psi_{1}^k(x_1^{0,k}) + \sigma \alpha_{1}^k \la \nabla \psi_{1,k}(x_1^{m_{1}}), d_{1}^k \ra \\
    & \leq \cdots \leq - \sum_{\ell = 1}^{\mc{L}} \nu_\ell \sum_{i = 1}^{m_\ell} \frac{1}{\tau_\ell} D_{\vphi_\ell}(x_\ell^{i-1,k}, x_\ell^{i,k}) < 0,
\end{align}
where $\nu_\ell = \prod_{j = 1}^{\ell - 1} \sigma \alpha_{j}^k$ holds for $x_{\ell}^{k+1} - x_\ell^{0,k} \in \rg(R_\ell)$. This confirms that the descent property holds in the multilevel case.

\paragraph{Convergence} 
The convergence result in \cref{thm:2l-convergence} extends to the multilevel case under the same assumptions. If $f$ satisfies \cref{ass:solveability,ass:PL-with-L-dependence} with constants $L, \eta > 0$ and function $\theta$, and the coarse functions satisfy \cref{assump:solvability-coarse-ml} with $L_\ell > 0$, then the function values of the iterates $\{x^k\}_{k \in \N}$ of \cref{alg:ML-BPGD} converge. More precisely:
\begin{equation}
    f(x^k) - f^{\min} \leq (1-r)^k f^0 - f^{\min} - \sum_{t = 0}^{k-1}(1-r)^{k-t}\rho^t
\end{equation}
with 
\begin{equation}
    \rho^t = 
    \begin{cases}
       \sum_{\ell = 1}^{\mc{L}} \prod_{j = 1}^{\ell - 1} \sigma \alpha_{j,t} \sum_{i = 1}^{m_\ell} \frac{1}{\tau_\ell} D_{\vphi_\ell}(x_\ell^{i-1,t}, x_\ell^{i,t}), &t \text{ triggers a coarse correction} \\
        0, &\text{otherwise}
    \end{cases}
\end{equation}
and constant $r := \frac{\theta(\tau) \eta}{\tau}$. The proof follows the same reasoning as in the two-level case, but extends naturally across multiple levels.

Thus, extending to the multilevel setting introduces no fundamental difficulties, and all key properties remain valid.

\subsection{Examples of constructing coarse constraints}
In this section, we focus on the consistency of feasibility, cf. \Cref{sec:2l-BPGD-coarse-model}, and provide explicit examples of how to handle the constraints in our multilevel framework. The following special cases are relevant for the numerical experiments in \Cref{sec:NumExperiments}.
To simplify notation in this section, we use the same symbols $P$ and $R$ for the level-dependent prolongation and restriction operators.

\subsubsection{Separable linear constraints} \label{appdx:constraints_separable_linear}
A key result for adaptable separable linear bounds, originally proposed in \cite{Gelman1990} and later generalized in \cite{Gratton:2008}, is employed in our examples. For $u_0, l_0 \in \R^{n_0}$, with $-\infty \leq l_0 \leq u_0 \leq \infty$ we define the adapted coarse constraints for the $\ell$-th level with initial value $x_\ell = Rx_{\ell-1}$ using the $\ell_\infty$ recursive update 
\begin{equation} \label{eq: recursive lower bound}
    \{l_\ell\}_j = \{x_\ell\}_j + \frac{1}{\norm{P}_\infty} \max_{t \in [n_{\ell -1}]} \begin{cases}
        \{l_{\ell - 1} - x_{\ell - 1}\}_t, & \quad P_{tj} > 0 \\
        \{x_{\ell - 1} - u_{\ell - 1}\}_t, & \quad P_{tj} < 0
    \end{cases}
\end{equation}
and
\begin{equation} \label{eq: recursive upper bound}
    \{u_\ell\}_j = \{x_\ell\}_j + \frac{1}{\norm{P}_\infty} \min_{t \in [n_{\ell -1}]} \begin{cases}
        \{u_{\ell - 1} - x_{\ell - 1}\}_t, & \quad P_{tj} > 0 \\
        \{x_{\ell - 1} - l_{\ell - 1}\}_t, & \quad P_{tj} < 0.
    \end{cases}
\end{equation}

The recursive definitions rely on the prolongation operators $P$, which are crucial in ensuring the feasibility of the iterates obtained from the coarse correction steps. For a detailed discussion of our choice of such matrices $P$, see \Cref{sec:NumExperiments,sec:2l-BPGD-transfer-ops}. The feasibility of the iterates is guaranteed by the following lemma.
\begin{lemma}[\cite{Gratton:2008}, Lemma 4.3]\label{lm: gratton}
    Let $C := [l_0, u_0]$ and define $C_{\ell}^k := [l_\ell^k, u_\ell^k]$ recursively according to definitions \eqref{eq: recursive lower bound}, \eqref{eq: recursive upper bound} for the points $x_\ell^{0,k}=Rx_{\ell-1}^{m_{\ell-1},k}$. This enforces the inclusion
    \begin{equation}
        x_{\ell-1}^{m_{\ell-1}, k} + P(w - x_{\ell}^{0,k}) \in C_{\ell-1}^k \quad \text{for all} \; w \in C_{\ell}^k
    \end{equation}
    for all levels $\ell$ and all iterates $k$.
\end{lemma}
\begin{remark}
    Note, $u = \infty$ corresponds to nonnegative constraints. In this case, we only need to adapt the lower bounds to obtain consistent constraints. The case of negative constraints $l = - \infty$ follows the same argument.
\end{remark}
\subsubsection{Nonseparable linear constraints: the simplex}
Accommodating simplex constraints to the above setting by adding an equality constraint involving all the variables, that is
\begin{equation}
    \sum_{i = 1}^n x_i = S,
\end{equation}
turns the constraints inseparable. For $l \in \R^n$ and scalar $S$, let \begin{equation}
    \Delta^{n}(l,S) := \{ x \in \R^{n}: \sum_{i = 1}^{n} x_i = S, x_i \geq l \}
\end{equation}
denote the scaled and translated $n$-dimensional probability simplex. The following proposition describes how we can choose the coarse constraints to prolong back to the standard probability simplex under the right conditions.
\begin{proposition} \label{prop:criticality-consistency-simplex}
    Let $C = \Delta^n$. Set $S_\ell^k:= \sum_{i = 1}^{n_\ell} \{x^{0,k}_\ell\}_i$, and define $C_\ell^k := \Delta^{n_\ell}(l^k_\ell, S_\ell^k)$ recursively with \eqref{eq: recursive lower bound} and $x_\ell^{0,k}=Rx_{\ell-1}^{m_{\ell-1},k}$. If the prolongation operator is an interpolator satisfying the partition-of-unity property, then 
    \begin{equation}
        x_{\ell-1}^{m_{\ell-1}, k} + P(w - x_{\ell}^{0,k}) \in C_{\ell-1}^k \quad \text{for all} \; w \in C_{\ell}^k
    \end{equation}
    for all levels $\ell$ and all iterates $k$.
\end{proposition}
\begin{proof}
    We only need to check the equality constraint, since \cref{lm: gratton} ensures that the the lower bounds satisfy the needed property. Let $w \in C_\ell^k$. Setting $z = x_{\ell-1}^{m_{\ell-1}, k} + P(w - x_{\ell}^{0,k})$ for ease of notation, it suffices to show
    \begin{equation} \label{eq:simplex-sonstraints-ZE}
        \sum_{i = 1}^{n_\ell} w_i - \{x_\ell^{0,k}\}_i = 0,
    \end{equation}
    since $P$ is sum-preserving, which implies $\la  P(w - x_{\ell}^{0,k}), \eins \ra = 0$ and thus $\la z, \eins \ra = \la x_{\ell-1}^{m_{\ell-1}, k}, \eins \ra \allowbreak = S_{\ell - 1}^k$. Since \eqref{eq:simplex-sonstraints-ZE} is a tautological consequence of the construction of $C_\ell^k,$  the statement follows.  
\end{proof}

\section{Numerical experiments}\label{sec:NumExperiments}
The goal of this chapter is to demonstrate the advantages of the proposed ML-BPGD framework for different image reconstruction purposes: Poisson-noisy deconvolution, \Cref{sec:deconvolution}, tomographic reconstruction, \Cref{sec:tomographic_reconstruction}, and its optimal design, \Cref{sec:d-optimal-design}. Code and examples illustrating these numerical results are available here\footnote{\url{https://github.com/yaraelshiaty/multigrid}}. 

\begin{figure}[h!]
    \centering
    \begin{subfigure}{0.32\textwidth}
        \centering
        \includegraphics[width=0.95\linewidth]{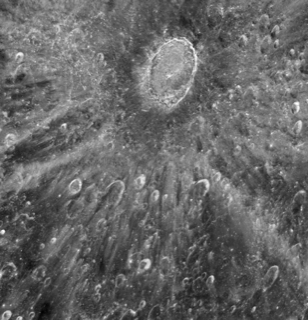}
    \end{subfigure}
    \hfill
    \begin{subfigure}{0.32\textwidth}
        \centering
        \includegraphics[width=\linewidth]{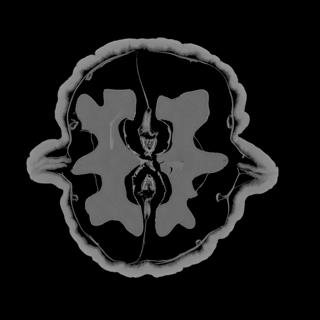}
    \end{subfigure}
    \hfill
    \begin{subfigure}{0.32\textwidth}
        \centering
        \includegraphics[width=\linewidth]{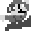}
    \end{subfigure}
    
    \caption{\textbf{The reference images for the three experiments.} \texttt{Left}: Crater Tycho on the Moon, taken by the Hubble Space Telescope, \url{https://science.nasa.gov/image-detail/tycho-crater/}, for Poisson-noisy deconvolution. \texttt{Center}: Walnut Phantom, \cite{tomographicxraydatawalnut}, for tomographic reconstruction. \texttt{Right}: Jumping Mario, for D-optimal design in tomography.}
    \label{fig:reference-images}
\end{figure}

We present the problems in their fine formulation, while highlighting the geometry that underlies them. Then, we contextualize them within the multilevel framework and 
derive suitable coarse models using established trust-region methods. Here, we detail the choices for the coarse geometry and discuss the feasibility of our ML-BPGD framework. 
The image sizes ($511^2, 1023^2$) used in the experiments are representative of typical reconstruction tasks and already sufficient to reveal the convergence benefits of the multilevel approach. For the third experiment, which is based on the Fisher-information objective, smaller images are employed since the underlying computations become numerically unstable and increasingly expensive as resolution grows, making a multilevel treatment particularly valuable even at modest sizes.

\begin{remark}
    Our multilevel framework presented in \cref{alg:ML-BPGD} has many technical hyperparameters to tune: the number of levels and 
    the number of iterations within each level, the transfer operators, the coarse correction condition and its parameters. In the experiments presented in this paper, we have selected specific values for each of these parameters. These choices were made to match the needs of each of the problems, but we do not claim or demonstrate that they represent optimal configurations.
\end{remark}

\subsection{Deconvolution} \label{sec:deconvolution}
In astronomical image processing, the goal is to recover the true image of celestial objects from Poisson-distributed noisy measurements, often representing photon counts distorted by the telescope's point spread function (PSF). The relationship between the data and the unknown image is modeled with a nonnegative linear operator $A \in \R^{m \times n}_{+} $ (with nonzero rows), and the measurement vector $b \in \R^m_{++}$ is subject to Poisson noise. A natural proximity measure for this problem is the Kullback-Leibler ($\KL$) divergence, which, when minimized, is equivalent to maximizing the Poisson log-likelihood. Therefore, we consider the objective:
\begin{equation}\label{eq:objective-astro-klbAx}
    \min_{x \in \R^n_+} \KL(b, Ax) = \la b, \ln \frac{b}{Ax} \ra - \la \eins , b - Ax \ra.
\end{equation}
which aims to find the $I$-projection \cite{nielsen2018informationprojection} of $b$ onto the nonnegative orthant.
While \eqref{eq:objective-astro-klbAx} is convex, it lacks a globally Lipschitz continuous gradient. However, \eqref{eq:objective-astro-klbAx} is $\norm{b}_1$-smooth relative to the log-barrier function $\vphi(x) = - \la \eins, \ln x \ra$, compare \cref{lm:klbAx_rel_smth_to_log_barrier}. 

\paragraph{Experimental setup} We consider the Crater Tycho on the Moon image (\cref{fig:reference-images}, left), taken by the Hubble Space Telescope, scaled to the size $512 \times 512$ and 
blurred by a PSF kernel, with Poisson noise added to the resulting image $b$.
. We consider $4$ different scenarios corresponding to different combinations of the size of the Gaussian blur PSF and the level of Poisson noise, see \cref{tbl:blur_vs_noise}. We initialize all experiments with $x^0 = 0.5 \cdot \eins_n$.

\begin{table}[h]
\centering
\begin{tabular}{lcc}
\toprule
& $\lambda (\text{noise}) = 1000$ & $\lambda(\text{noise}) = 15$ \\
\midrule
dim(PSF) = 15, $\sigma(\text{PSF}) = 1.5$ & low blur, low noise & low blur, high noise \\
dim(PSF) = 27, $\sigma(\text{PSF}) = 5$ & high blur, low noise & high blur, high noise \\
\bottomrule
\end{tabular}
\caption{Four configurations of Gaussian blur convolution with multiplicative Poisson noise, $b \sim \frac{1}{\lambda} \Poi(\lambda A(\text{input})\protect\footnotemark[1]$. Image reconstructions and decay of objective functions are presented in \cref{fig:images-deconvolution,fig:CPU-deconvolution}, respectively. The dimension of a PSF is the width of its kernel, while $\sigma$, the standard deviation, controls the spread of that blur within the kernel. They construct the blur matrix $A$.}
\label{tbl:blur_vs_noise}
\end{table}

\footnotetext{
The elements of the measurement vector $b$ follow a Poisson distribution with $\mathbb{E}[b_i] = A_i(\text{input})$ and
$\Var[b_i] = \frac{A_i(\text{input})}{\lambda}$. Smaller values of $\lambda$ imply higher variance and thus more noise. }

\begin{figure}[h]
\centering

\begin{subfigure}{0.45\textwidth}
    \centering
    \caption*{\small \texttt{low blur, low noise}}
    \vspace{-0.3em}
    \includegraphics[width=1.\linewidth]{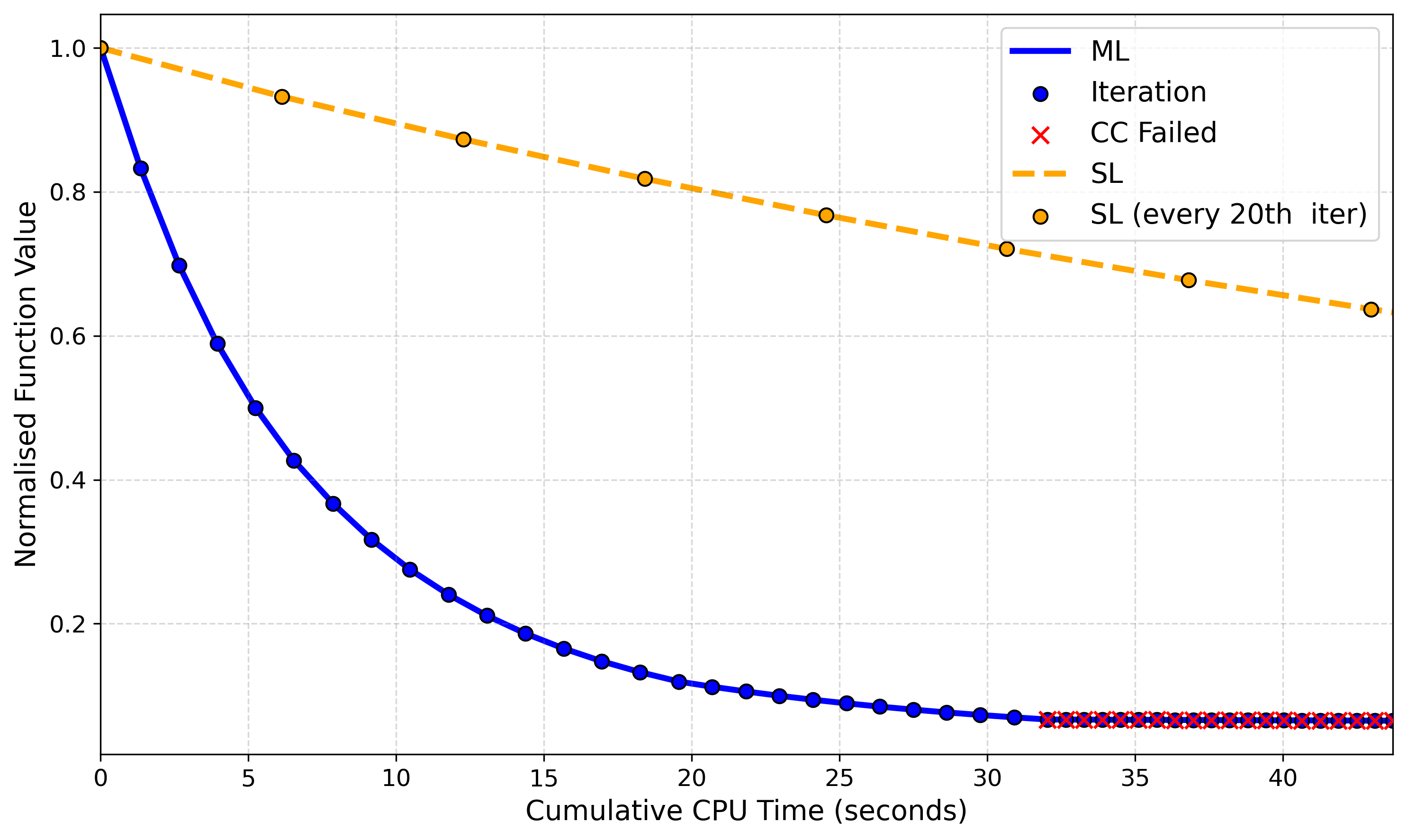}
\end{subfigure}
\hspace{.5em}
\begin{subfigure}{0.45\textwidth}
    \centering
    \caption*{\small \texttt{low blur, high noise}}
    \vspace{-0.3em}
    \includegraphics[width=1.\linewidth]{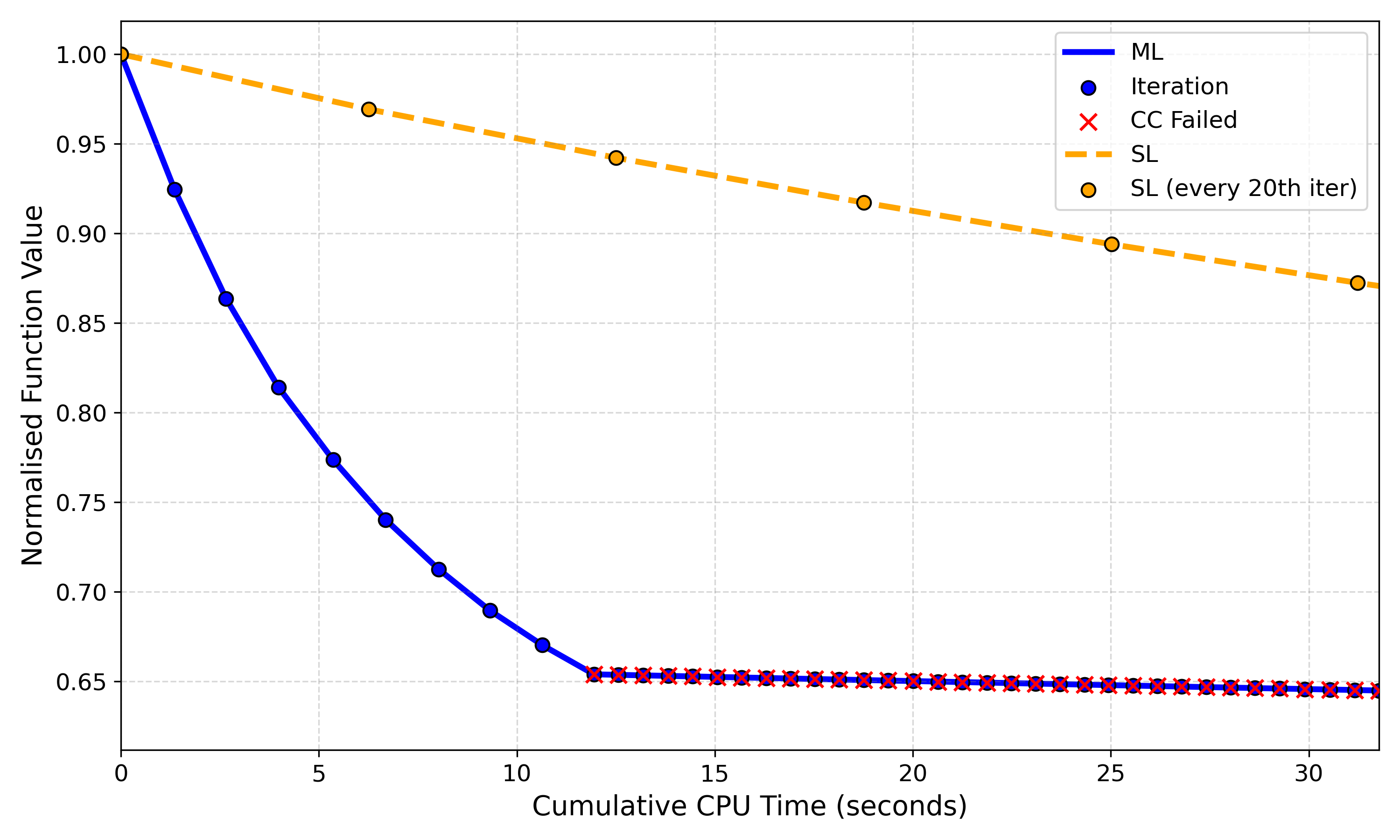}
\end{subfigure}

\vspace{1em}

\begin{subfigure}{0.45\textwidth}
    \centering
    \caption*{\small \texttt{high blur, low noise}}
    \vspace{-0.3em}
    \includegraphics[width=1.\linewidth]{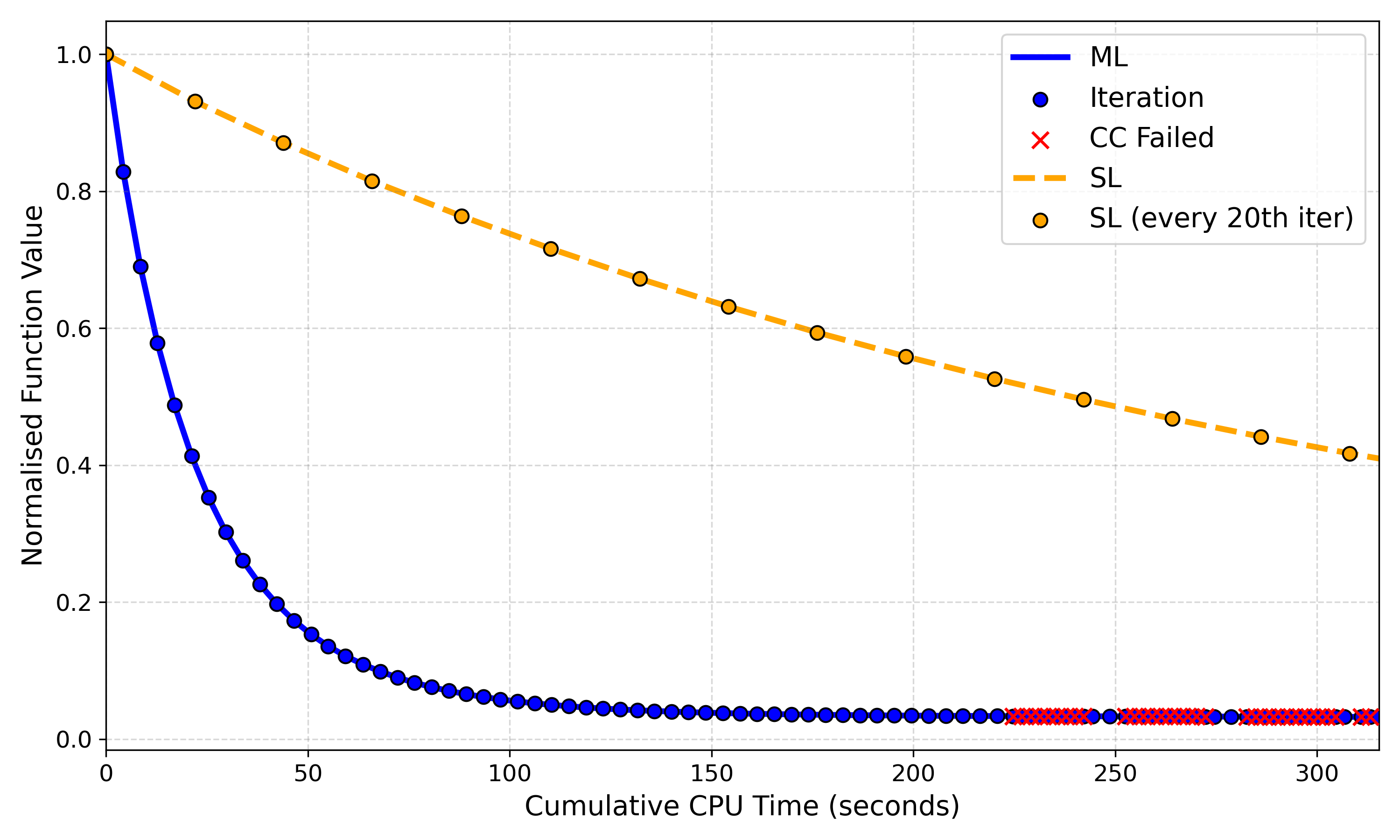}
\end{subfigure}
\hspace{1em}
\begin{subfigure}{0.45\textwidth}
    \centering
    \caption*{\small \texttt{high blur, high noise}}
    \vspace{-0.3em}
    \includegraphics[width=1.\linewidth]{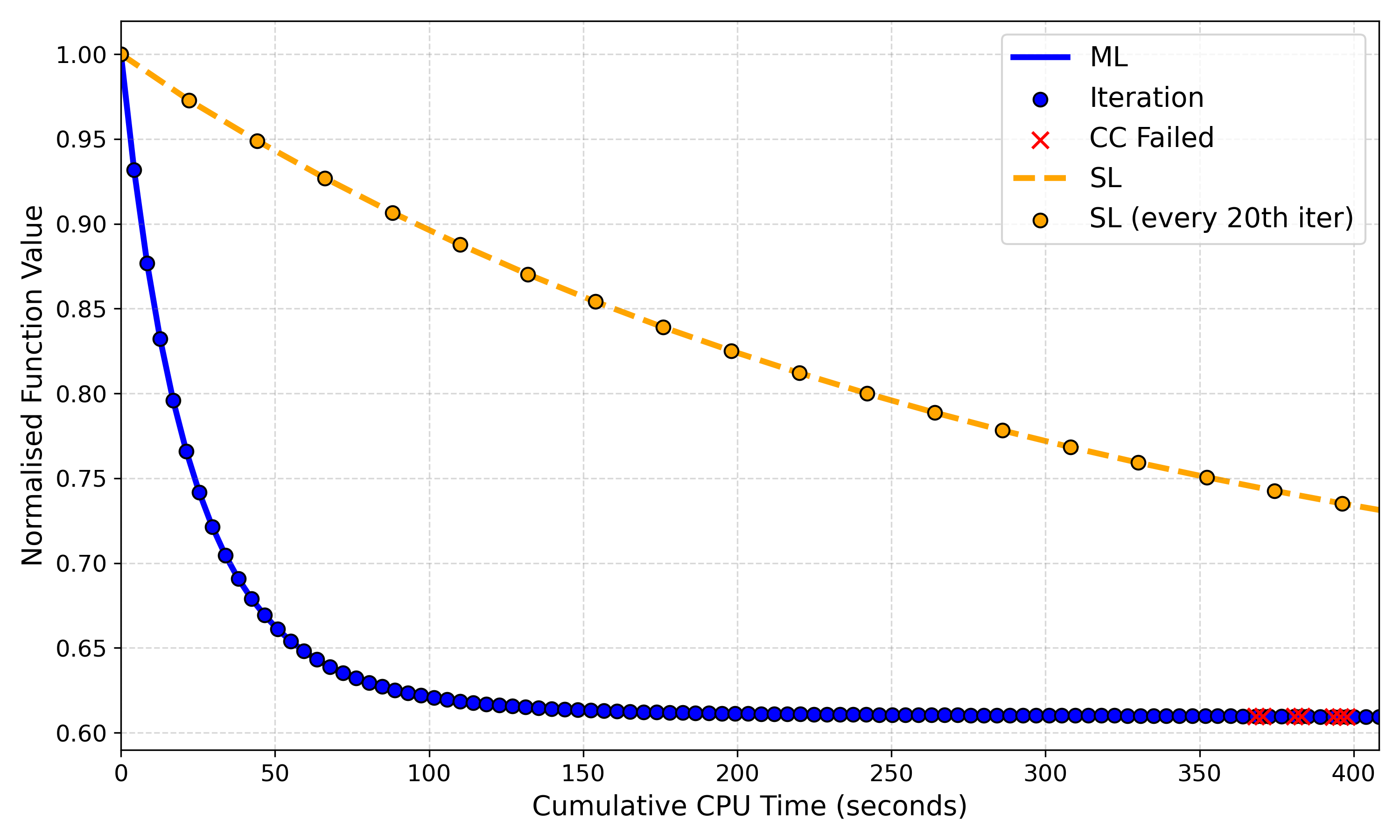}
\end{subfigure}

\caption{\textbf{Normalized function value vs CPU time (in seconds)}. Deblurring performance across the various blur and noise conditions specified in \cref{tbl:blur_vs_noise}, using the Tycho Crater image. \texttt{Yellow (SL):} Single-level BPGD, with markers shown every 20 iterations. \texttt{Blue (ML):} ML-BPGD with two coarse levels; markers are shown every iteration. Violations of the coarse correction condition at the finest level are indicated by red 'x' markers in the plot. Our ML-BPGD far outperforms the single-level variant across all specified blur and noise variants.}
\label{fig:CPU-deconvolution}
\end{figure}

\paragraph{Multilevel structure} We use \cref{alg:ML-BPGD} with a total of $3$ levels, with coarse grid sizes $255 \times 255$ and $123 \times 123$; one iteration performed on the finest level, and $10$ iterations each for the coarse ones. Images are transferred between the levels using the bilinear interpolator kernel $K_{2{\text{D}}}$, see \eqref{eq:2D-kernel} , whereas an Armijo line search is employed for the coarse corrections.
The size of the Gaussian blur kernel, 
its standard deviation and the expected Poisson noise level do not change across levels.
Coarse correction steps use the constants $\kappa = 0.49$ and $\epsilon = 1e-3$ for checking the coarse correction criteria. We use the unconstrained version to simplify computations, cf. the discussion in \Cref{sec:2l-BPGD-CCC} for details. The evolution of the function value to CPU time for the four different setups is displayed in \cref{fig:CPU-deconvolution}. Effectively, the objective function in ML-BPGD decreases more rapidly than in BPGD, achieving comparable reductions approximately 40 iterations earlier. The deblurred and denoised images obtained after 60 iterations are shown in \cref{fig:images-deconvolution}.
\paragraph{Coarse problem construction}
The coarse minimization problems are given by
\begin{equation} \label{eq:coarse-model-KLbAx-astro}
    \min_{x \in C_\ell^k} \psi_{\ell}^k(x), \quad \psi_{\ell}^k \text{ as in } \eqref{eq:coarse-model-ML}, \quad f_\ell(x) = \KL(b_\ell, A_\ell x)    
\end{equation}
with the Gaussian blur $A_\ell$ and the noisy and blurred image $b_\ell$, for $\ell = 1,2$, across all iterations $k \in \N$.
The constraint sets $C_\ell^k = [l_\ell^k, \infty)$ are defined by adapting the upper bound as discussed in detail in \cref{appdx:constraints_separable_linear}. We initialize with $l_0^k = 0$ for all $k$ and recursively compute the updated bounds by \eqref{eq: recursive lower bound}. Note that all entries of the bilinear interpolator $P$ are positive and its row sums are normalized.
\cref{prop:log-barrier-orthant-to-box} shows that the coarse objectives are $\norm{b_\ell}_1$-smooth relative to the 
adapted log-barrier function
\begin{equation}
    \vphi(x) = - \sum_{i = 1}^{n_\ell} \ln (x_i - \{l_\ell^k\}_i).
\end{equation}
The efficient computation of the BPGD updates for \eqref{eq:objective-astro-klbAx} and \eqref{eq:coarse-model-KLbAx-astro} is thoroughly discussed in \Cref{sec:solving_BPGD_iterates_log_barrier}. We use constant step sizes given by the inverse of the relative smoothness constants $\tau_\ell = \norm{b_\ell}_1^{-1}$ for all levels.

\begin{figure}[ht]
\centering
\resizebox{0.7\textwidth}{!}{%
\begin{tabular}{c c c c}
    & \texttt{Blurred/Noisy Image} & \texttt{SL Reconstruction} & \texttt{ML Reconstruction} \\
    
    \raisebox{1\height}{\rotatebox[origin=c]{90}{\tiny \texttt{low blur, low noise}}} &
    \includegraphics[width=0.25\textwidth]{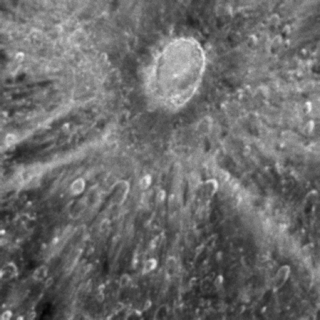} &
    \includegraphics[width=0.25\textwidth]{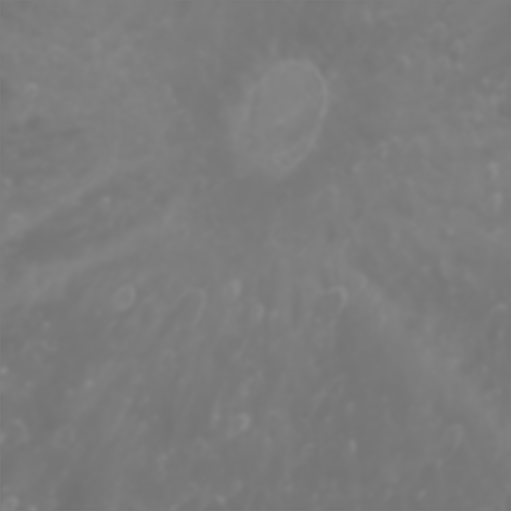} &
    \includegraphics[width=0.25\textwidth]{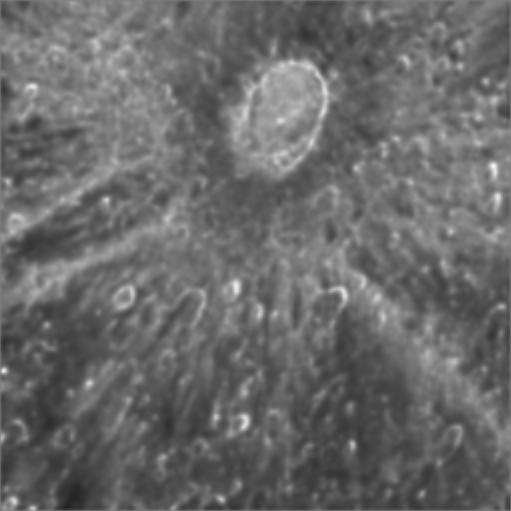} \\[.5em]

    \raisebox{1\height}{\rotatebox[origin=c]{90}{\tiny \texttt{low blur, high noise}}} &
    \includegraphics[width=0.25\textwidth]{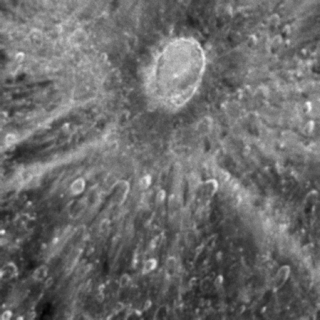} &
    \includegraphics[width=0.25\textwidth]{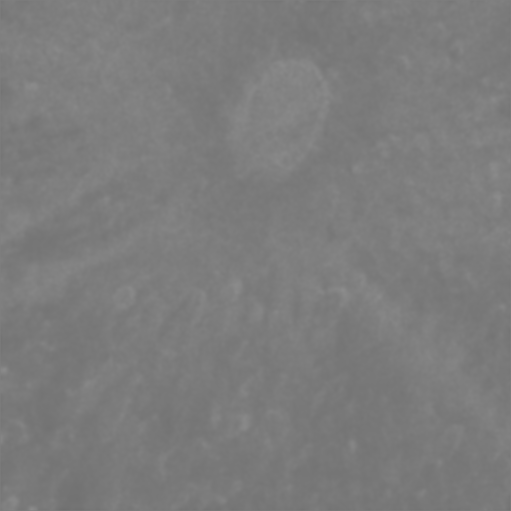} &
    \includegraphics[width=0.25\textwidth]{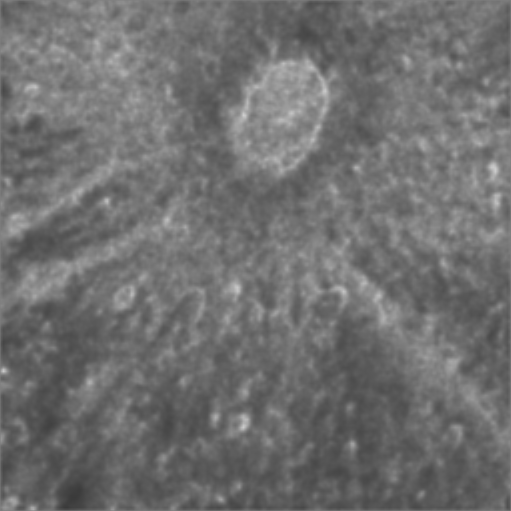} \\[.5em]

    \raisebox{1\height}{\rotatebox[origin=c]{90}{\tiny \texttt{high blur, low noise}}} &
    \includegraphics[width=0.25\textwidth]{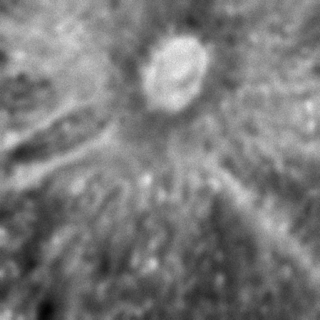} &
    \includegraphics[width=0.25\textwidth]{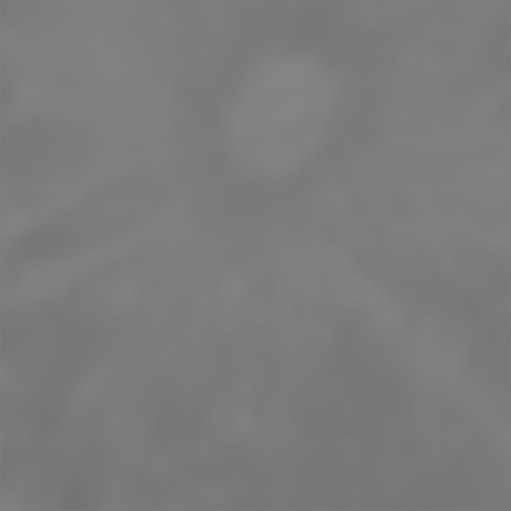} &
    \includegraphics[width=0.25\textwidth]{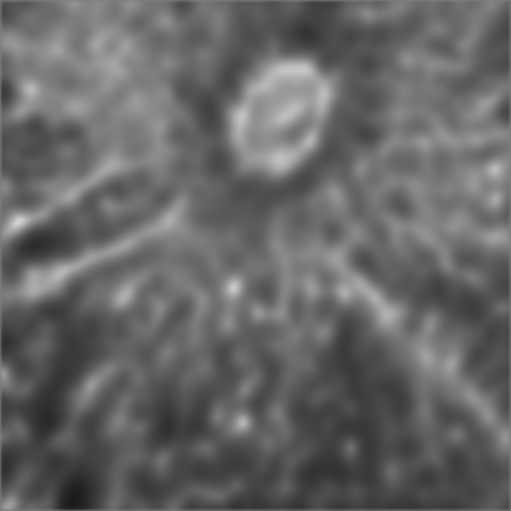} \\[.5em]

    \raisebox{1\height}{\rotatebox[origin=c]{90}{\tiny \texttt{high blur, high noise}}} &
    \includegraphics[width=0.25\textwidth]{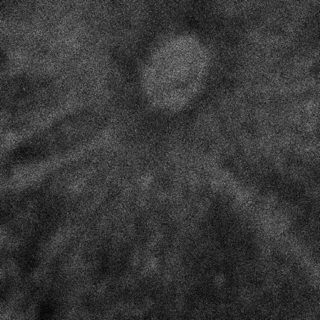} &
    \includegraphics[width=0.25\textwidth]{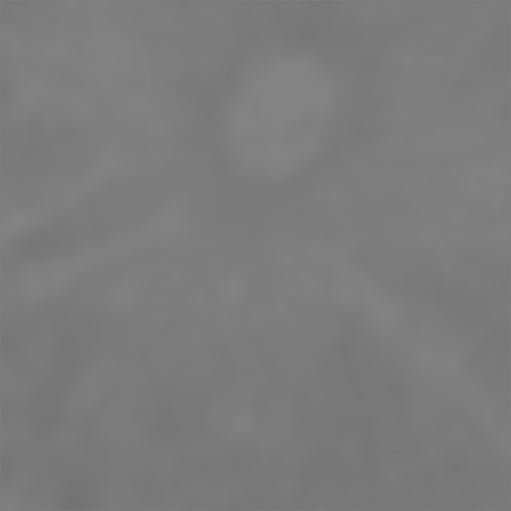} &
    \includegraphics[width=0.25\textwidth]{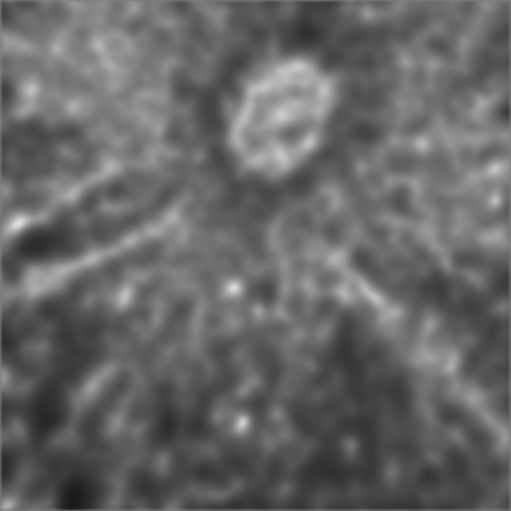}
\end{tabular}%
}
\caption{Deblurring results for the Crater Tycho image under varying noise and blur levels, as specified in \cref{tbl:blur_vs_noise}, after 60 iterations. The ML reconstructions far outperform their single-level counterparts, providing detailed images in few iterations, even in cases of severe degradation.}
\label{fig:images-deconvolution}
\end{figure}

\subsection{Tomographic reconstruction} \label{sec:tomographic_reconstruction}
Another state-of-the-art method for solving inconsistent, nonnegative linear systems is the Simultaneous Multiplicative Algebraic Reconstruction Technique (SMART) \cite{Andersen:1984,Atkinson_Soria_2009} which is particularly popular in tomographic reconstruction due to its efficiency and ability to handle sparse matrices. It can be viewed as a special case of the exponentiated gradient descent method applied to the objective 
\begin{equation}\label{eq:klAxb}
    \KL(Ax,b) = \la Ax, \ln \frac{Ax}{b} \ra - \la \eins, Ax - b \ra.
\end{equation} 
Unlike \eqref{eq:objective-astro-klbAx}, which seeks to match the observed noisy image $b \in \R^m_{++}$ to the expected blurred image $Ax$, \eqref{eq:klAxb} is tailored to ensure that the predicted projections match the actual measurements. SMART provides an efficient reconstruction scheme, particularly when the nonnegative system matrix $A \in \R^{m \times n}_+$ is sparse, as is typical in tomography, often returning meaningful solutions after only a few iterations. To explore its advantages further, we examine the objective with box constraints in the multilevel framework
\begin{equation} \label{eq:objective-tomography-klAxb}
    \min_{x \in [0,1]^n} \KL(Ax,b).
\end{equation}
Since $\KL(x,y) = D_\vphi(x,y)$ for the negative entropy $\vphi(x) = \la x, \ln x \ra - \la \eins, x \ra$, it is natural to choose the negative entropy as a prox function. In fact, \eqref{eq:objective-tomography-klAxb} is $\norm{A}_1$-smooth relative to $\vphi$, cf. \cref{lm:klAxb_rel_smth_to_neg_entropy}, and thus by \cref{prop:nnentropy-to-FD} it is $\norm{A}_1$-smooth relative to the Fermi-Dirac entropy
\begin{equation}
    \vphi_\Box(x) = \sum_{i=1}^n x_i\ln x_i + (1-x_i)\ln(1-x_i).
\end{equation}

\paragraph{Experimental setup} We reconstruct the Walnut phantom (\cref{fig:reference-images}, center) at a resolution of $n = 1023 \times 1023$, subsampled using a tomographic projection matrix $A\in\mathbb{R}^{m\times n}$ with the ASTRA toolbox\footnote{\url{https://astra-toolbox.com/}}. $200$ parallel beam projections are taken at equidistant angles in the range $[0, \pi]$ using $1023$ detectors, yielding $m = 204600$ total projections at an undersampling rate of $20\%$. We initialize with $x^0 = 0.5 \cdot \eins_n$.

\paragraph{Multilevel structure} We use \cref{alg:ML-BPGD} with a total of $3$ levels (coarse grid sizes $511 \times 511$ and $255 \times 255$): one iteration performed on the finest level, $5$ on the middle and $10$ iterations for the coarsest one.
For the coarse levels, we use as many detectors as the width of the coarse image with $100$ equidistant angles in the range of $[0, \pi]$ using parallel beam geometry for an undersampling rate of  $20\%$ and $40\%$ for the two coarse levels, the latter being the coarsest. We use the same transfer operators and coarse correction condition as in \cref{sec:deconvolution} and apply an Armijo line search for coarse corrections across all levels. The performance of the ML-BPGD method in comparison to its single-level counterpart is presented in \cref{fig:tomography}.

\paragraph{Coarse problem construction}
The coarse models are given by
\begin{equation} \label{eq:coarse-model-KLbAx-reconstruction}
    \min_{x \in C_\ell^k} \psi_{\ell}^k(x), \quad \psi_{\ell}^k \text{ as in } \eqref{eq:coarse-model-ML}, \quad f_\ell(x) = \KL(A_\ell x, b_\ell)  .  
\end{equation} 
The constraint sets $C_\ell^k = [l_\ell^k, u_\ell^k]$ are defined by recursive adaptation of the lower and upper bounds, see \cref{appdx:constraints_separable_linear} with initializations $l_0^k = 0$ and $u_0^k = 1$ for all $k \in \N$.
The coarse objectives are then $\norm{A_\ell}_1$-smooth relative to the adapted Fermi-Dirac entropies as defined in \eqref{eq:generalized-FD} with the computed bounds inserted. We use the inverse of the relative smoothness constant as the step size for BPGD iterates. The computation of the B(ounded)-SMART updates for \eqref{eq:objective-tomography-klAxb} and \eqref{eq:coarse-model-KLbAx-reconstruction} is detailed in \Cref{sec:solving_BPGD_iterates_log_barrier}. 

\begin{figure}[ht]
\centering

\begin{minipage}[c]{0.4\columnwidth}
    \centering
    \includegraphics[width=1.\linewidth]{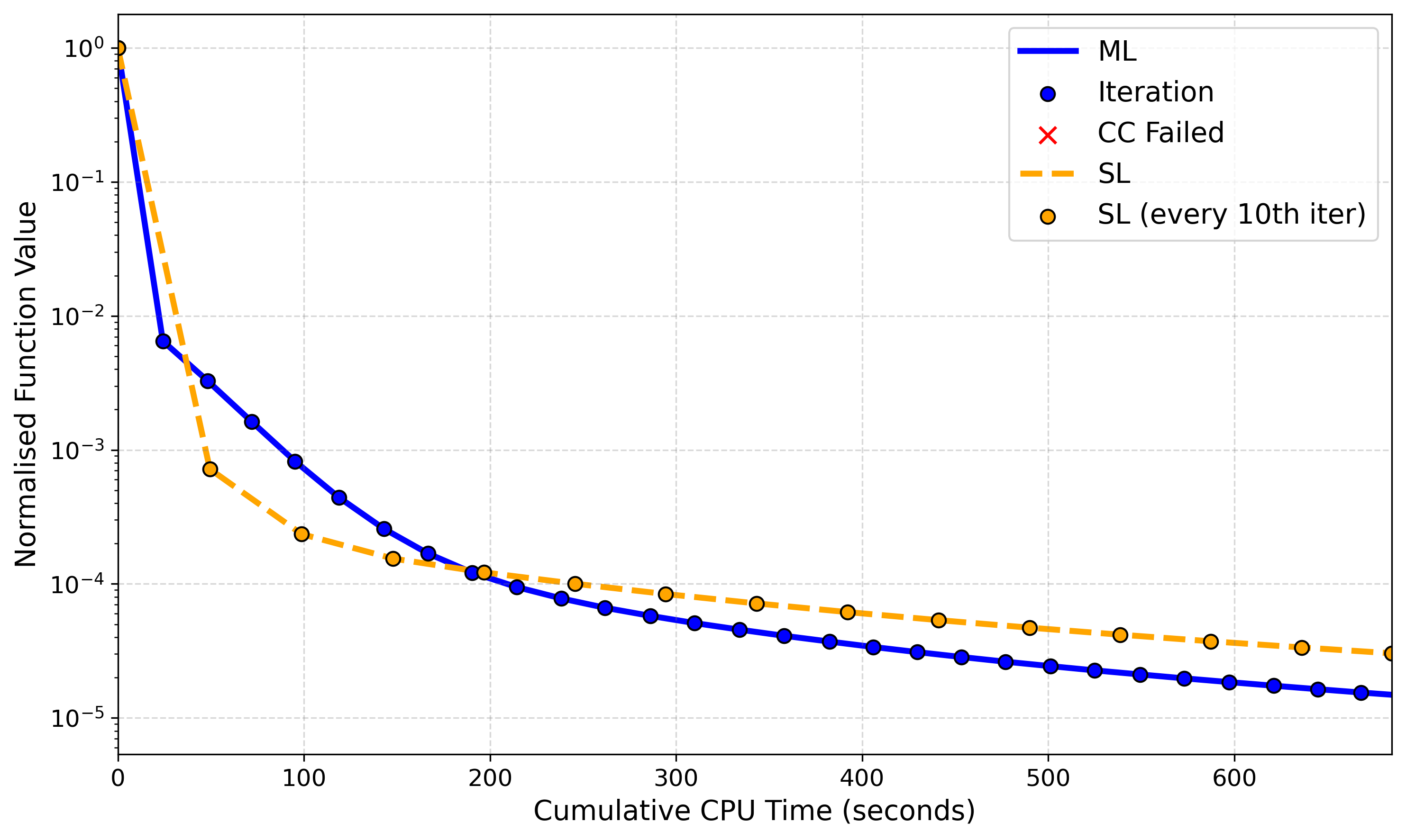}
\end{minipage}
\begin{minipage}[c]{0.5\columnwidth}
    \centering
    \setlength{\tabcolsep}{2pt}
    \begin{tabular}{c c c c c} 
        \texttt{iter1} & \texttt{iter10} & \texttt{iter20} & \texttt{iter30} & \\

        \includegraphics[width=0.2\columnwidth]{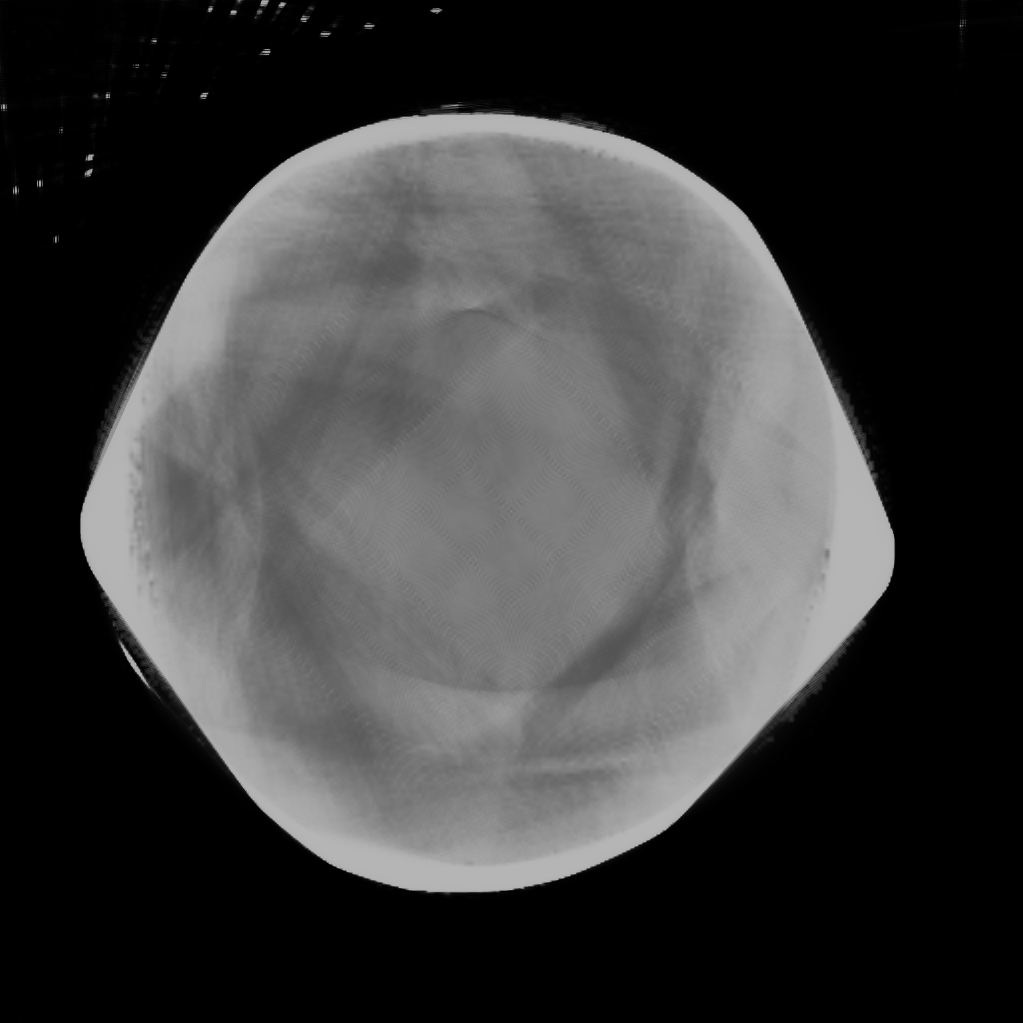} &
        \includegraphics[width=0.2\columnwidth]{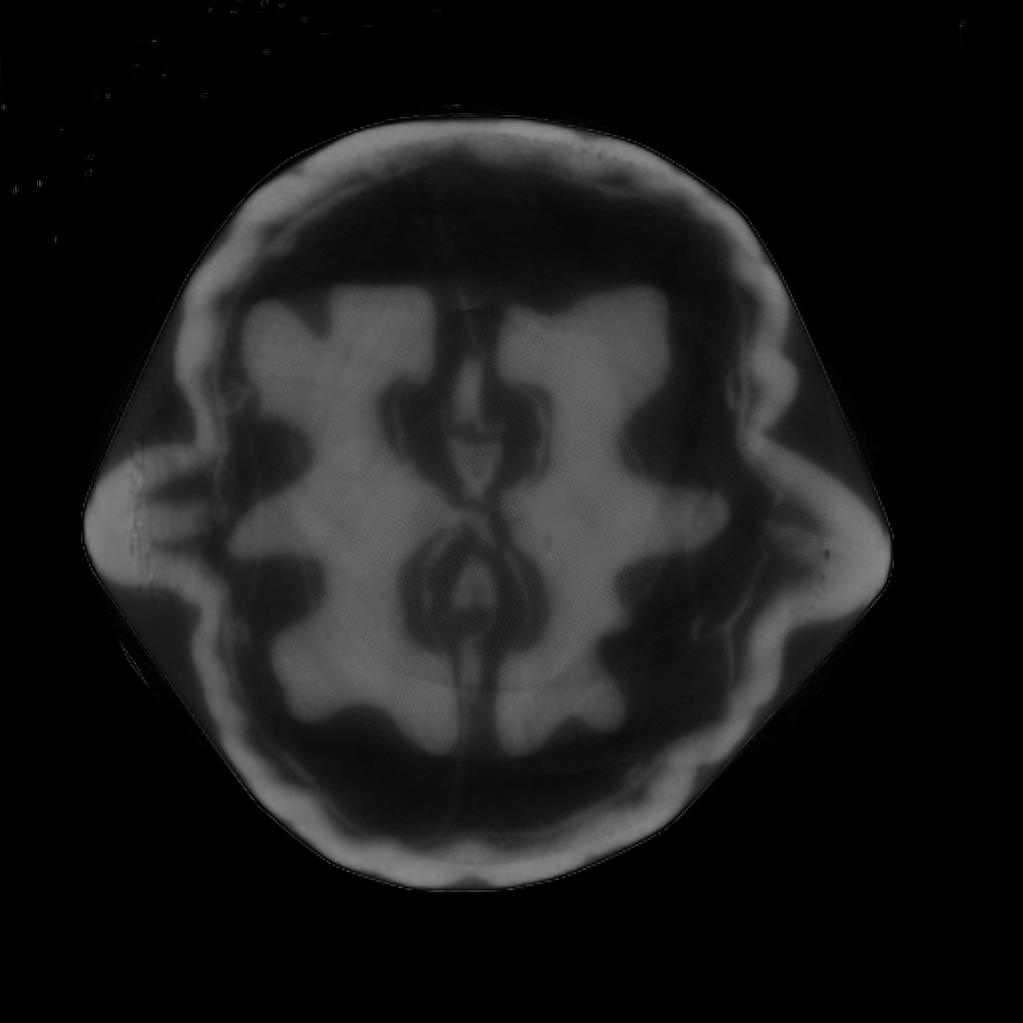} &
        \includegraphics[width=0.2\columnwidth]{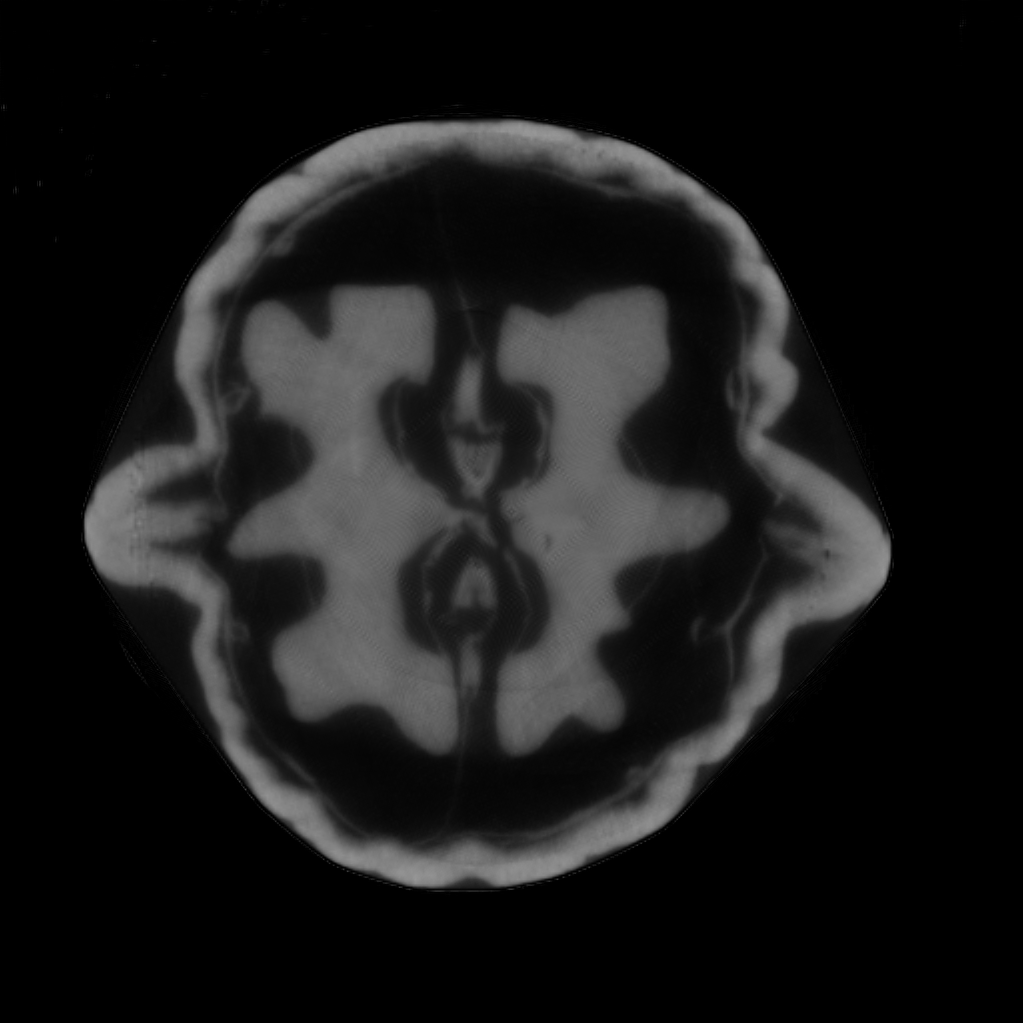} &
        \includegraphics[width=0.2\columnwidth]{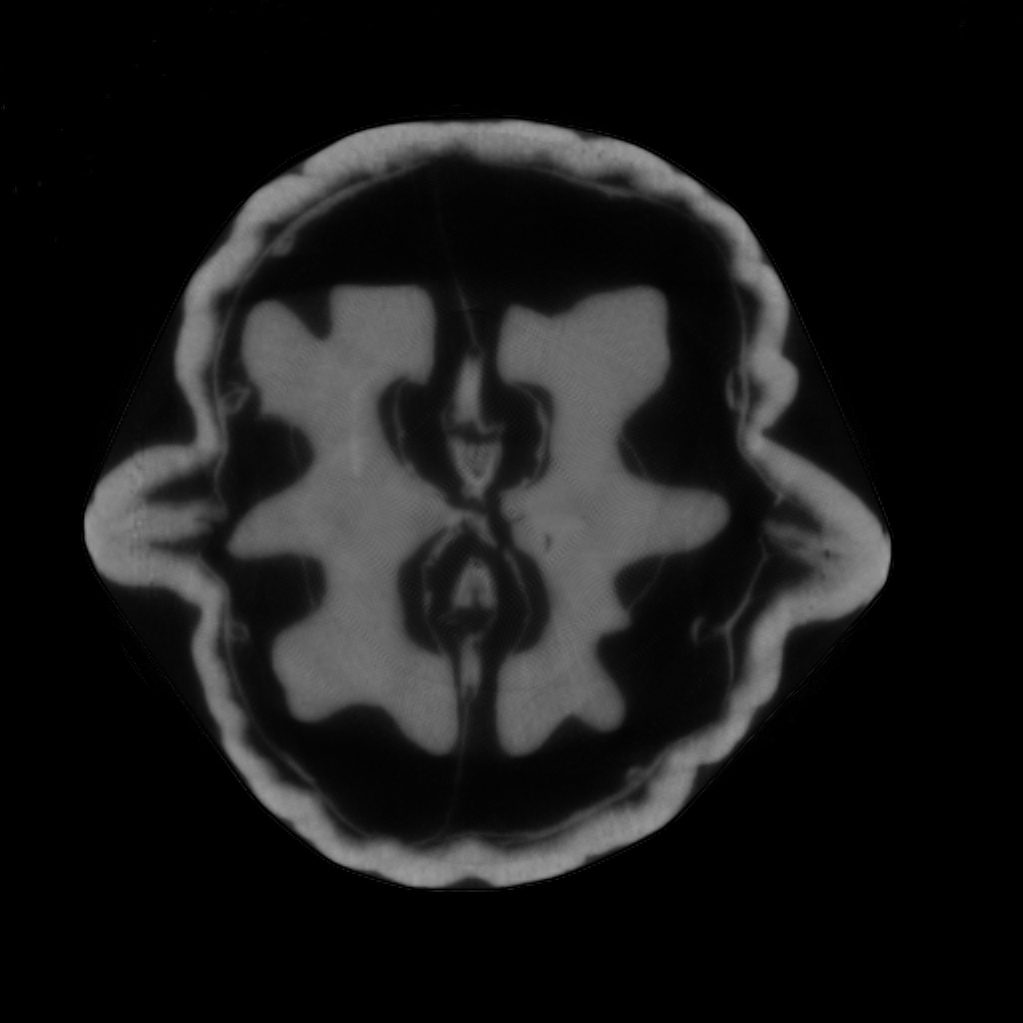} &
        \rotatebox{90}{\quad \; \texttt{ML}}
        \\[-1mm]
        
        \includegraphics[width=0.2\columnwidth]{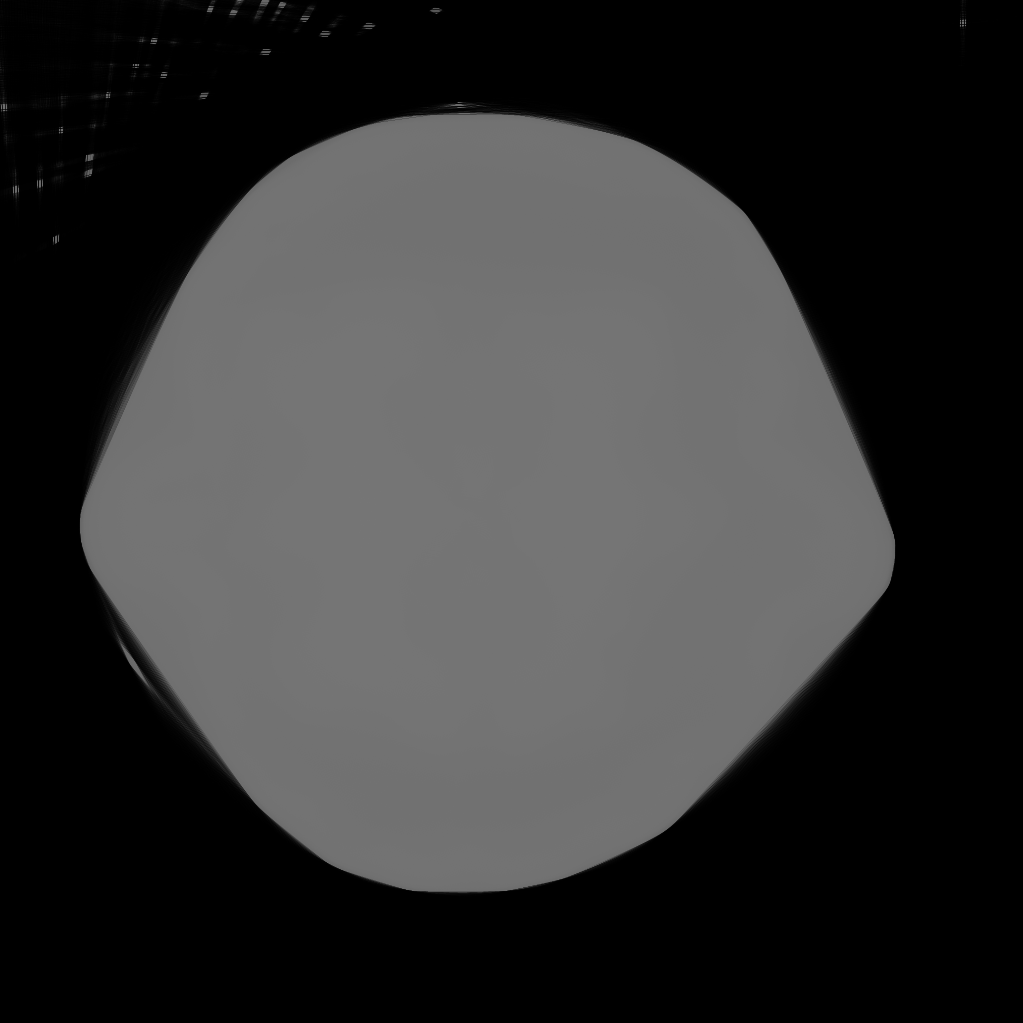} &
        \includegraphics[width=0.2\columnwidth]{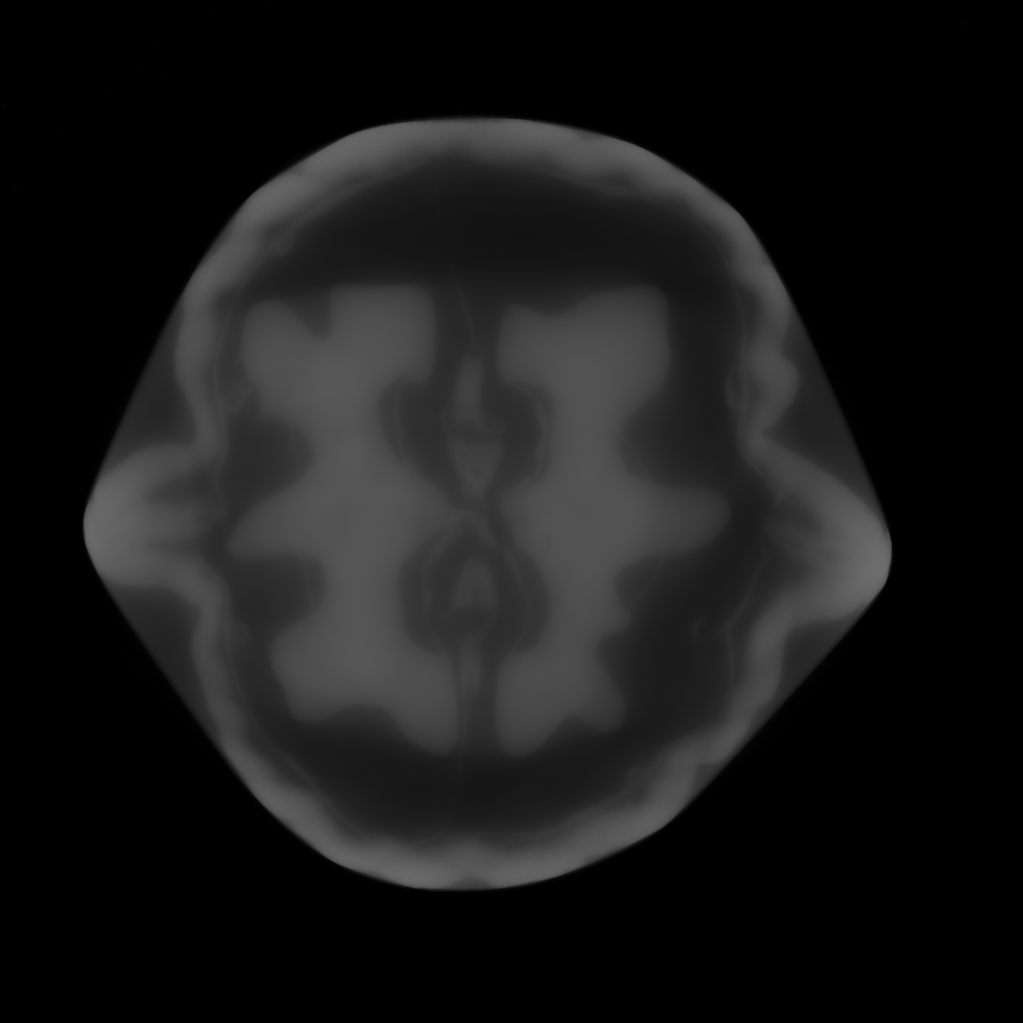} &
        \includegraphics[width=0.2\columnwidth]{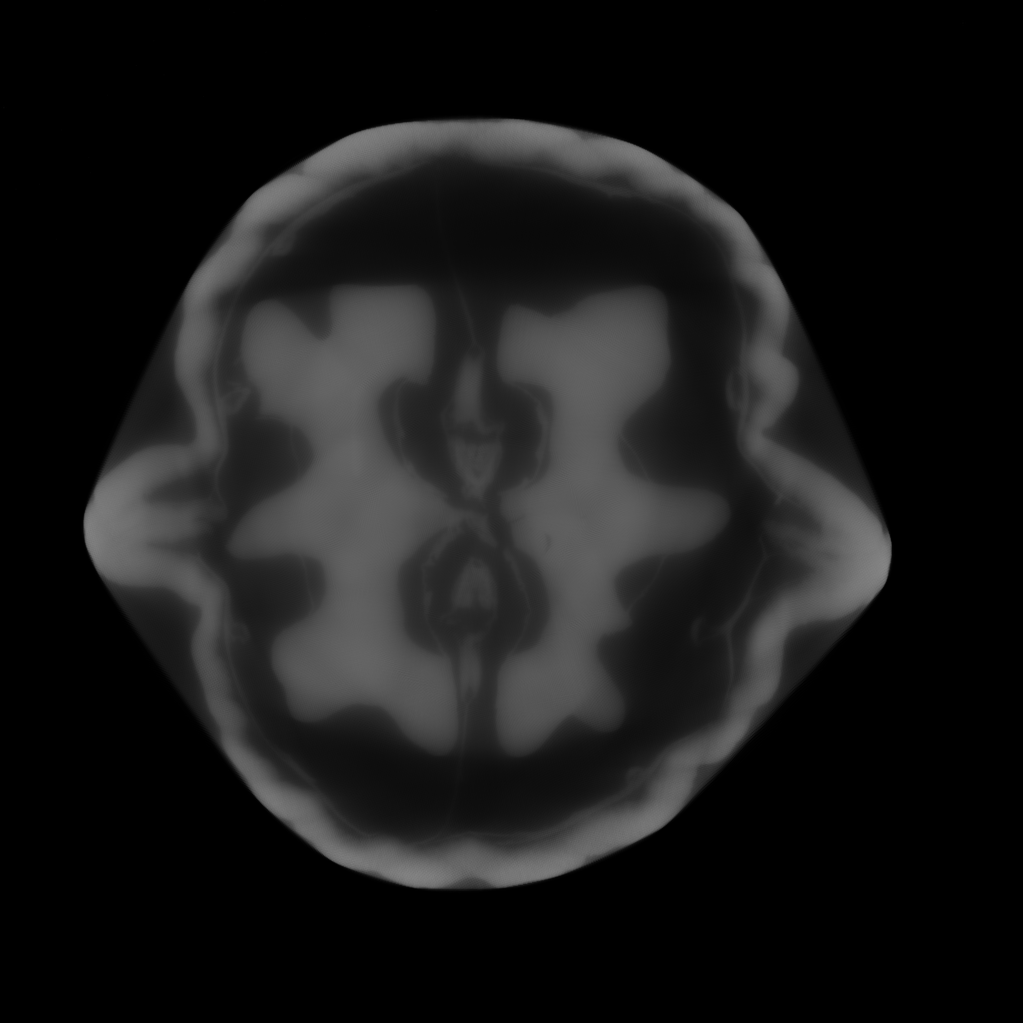} &
        \includegraphics[width=0.2\columnwidth]{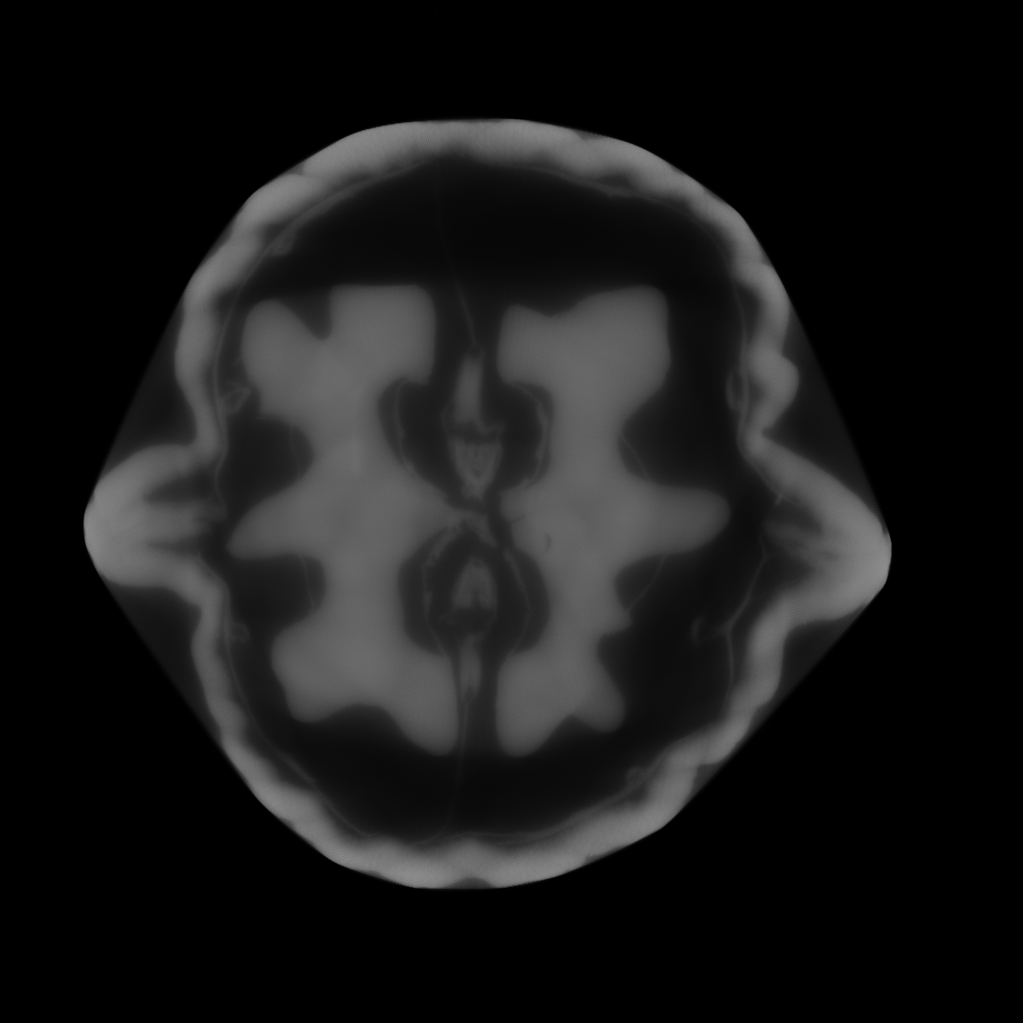} &
        \rotatebox{90}{\qquad \texttt{SL}} \\ 

        \texttt{iter1} & \texttt{iter50} & \texttt{iter100} & \texttt{iter150} &\\ 
    \end{tabular}
\end{minipage}
\caption{
\textbf{Comparison of multilevel vs. single-level BPGD for tomographic reconstruction.} Results are shown for the $1023 \times 1023$ Walnut Phantom from $20\%$ undersampled data. We use three discretization levels.
\texttt{Left:} Normalized objective function values vs. cumulative CPU time (in seconds). The multilevel BPGD (ML-BPGD, blue, with markers at each multilevel iteration) rapidly catches up to the already fast single-level method (SL, yellow, with markers every 10 iterations), achieving comparable objective values in just seven iterations. While each ML iteration incurs extra overhead due to coarse model computations, it shows a more substantial reduction in function value per iteration: approximately four ML iterations match the effect of 10 single-level ones. The coarse correction condition consistently holds up to the final plotted iteration.
\texttt{Right:} Selected iterations show consistently superior reconstructions from ML-BPGD.  Despite higher per-iteration costs, ML achieves comparable visual quality about five times faster than the SL approach. This demonstrates ML-BPGD’s clear advantage in highly undersampled settings, where objective values alone may not fully reflect reconstruction quality.
}
\label{fig:tomography}
\end{figure}

\subsection{D-optimal design} \label{sec:d-optimal-design}
Given a design system matrix $H \in \R^{m \times n}$ of rank $m$, with $n > m$, the D-optimal design problem optimizes the design variables on the $n$-simplex $x \in \Delta^n := \{x \in \R^n: \la \eins, x \ra = 1, x_i \geq 0 \; \forall i \in [n]\}$ of the experimental setup to maximize the information gained about the $m$-dimensional model parameters, \cite{Atwood69:optimaldesign}. It is stated as
\begin{align} \label{eq:fine-model-d-optimal-design}
    \min_{x \in \Delta^n} f(x) = - \ln \det(HXH^\top)
\end{align}
where $X := \Diag(x)$. The D-design problem is $1$-relatively smooth to the log-barrier function $\vphi(x) = - \la 1, \ln x \ra$, compare \cref{lm:d-opt-design_rel_smth_to_log_barrier}.

We adapt this D-optimal design setup to the problem of tomographic reconstruction. For a fixed amount of detectors $d$, the reconstruction matrix $A \in \R^{(d \cdot r) \times (n^2)}$ maps from the unknown internal structure (e.g. an $n \times n$ image) to the $r$ many measured projections at each detector. While the number of available detectors in, e.g., a CT scanner is fixed, the angles could vary. We set up a D-optimal design problem with $H = A^\top$ to identify the importance of each projection angle by maximizing the Fisher information matrix $HXH^\top$, and from this, extract the most informative angles under a sparsity constraint. This, in turn, yields lower reconstruction error and better noise robustness. For a detailed study of D-optimal design in the context of BPGD, see \cite{Lu2018}. Further methods for selecting the optimal experimental design for tomographic reconstruction are studied in \cite{Haldar2018,Fathi2025}

\paragraph{Experimental setup} We use $31$ detectors to measure the importance of $120$ equidistant angles in the range $[0, \pi]$ to reconstruct a $31 \times 31$ image. To model the capabilities of the optimized design under sparsity, we extract the best $15$ angles from the optimized design variables and compare it to the performance of $15$ equidistant angles in the range of $[0, \pi]$ for reconstructing the pixelated Mario image (\cref{fig:reference-images}, right). Here, we use a least-squares objective since optimizing the reconstruction objective is not the target of this experiment. We initialize using uniform weights.

\paragraph{Multilevel structure} We use \cref{alg:ML-BPGD} with only one coarse level, by restricting the number of detectors to match the width of the restricted $15 \times 15$ image.
We do so by employing a 1D linear interpolator defined by the $K_{1\text{D}}$ kernel, see \cref{sec:2l-BPGD-transfer-ops}, applied to the rows of the design variable reshaped into a matrix. We use three iterations on the coarse level and initialize with a uniform weighting. Since the algorithm converges in a few iterations, we simplify the coarse correction criteria to only check the proximity of the current iterate to the last one which triggered a coarse correction using $\norm{x^k - \tilde{x}} \geq \epsilon_x$ with $\epsilon_x = 1e-2$. We again use Armijo line search for the coarse corrections. Comparison of the performance of ML-BPGD to its single-level variant is presented in \cref{fig:d-design}.

\begin{figure}[ht]
\centering

\begin{minipage}[c]{0.5\textwidth}
    \centering
    \includegraphics[width=1.\linewidth]{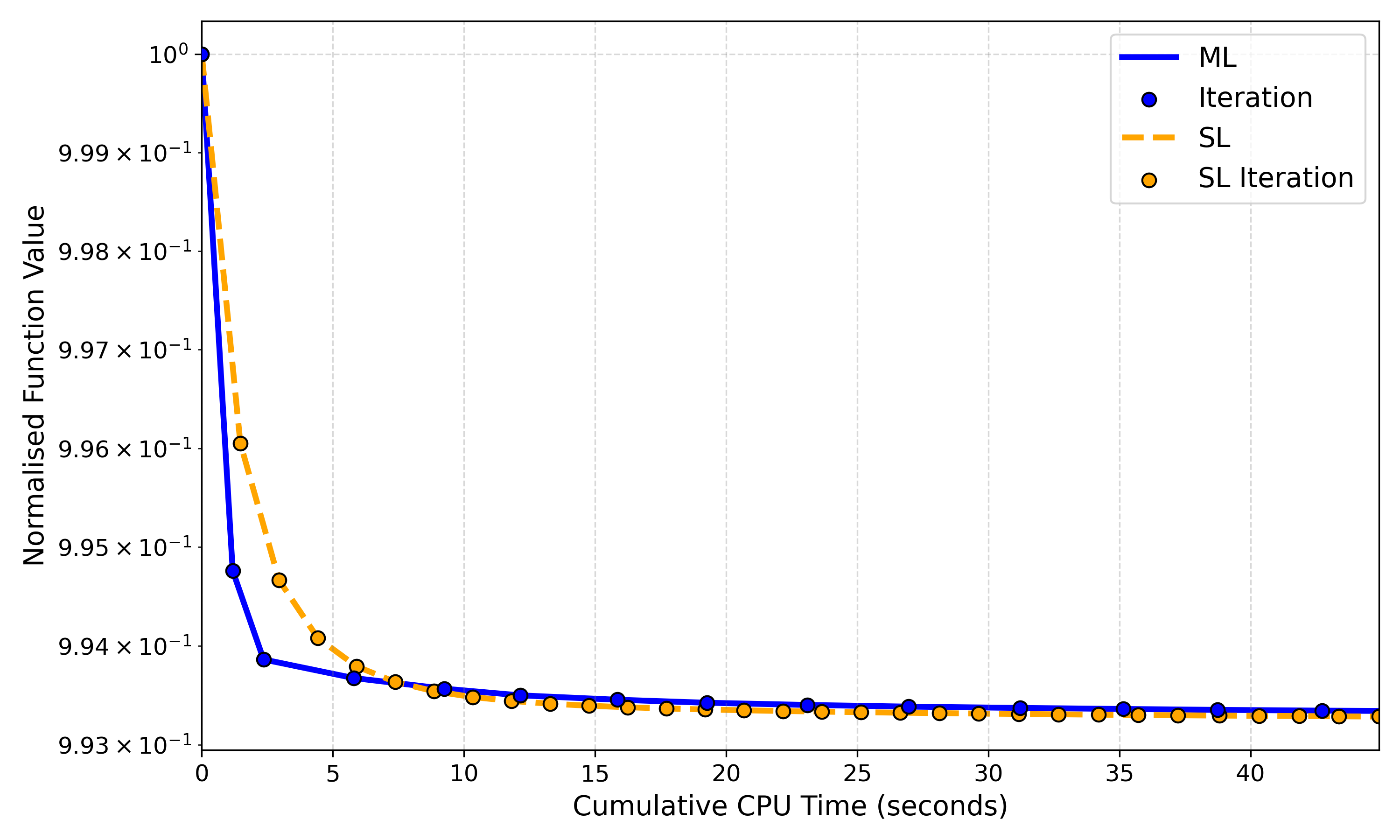}
\end{minipage}
\begin{minipage}[c]{0.35\textwidth}
    \centering
    \includegraphics[width=0.45\linewidth]{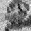}
    \hspace{1mm}
    \includegraphics[width=0.45\linewidth]{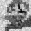}
    \\
    \includegraphics[width=0.45\linewidth]{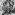}
    \hspace{1mm}
    \includegraphics[width=0.45\linewidth]{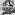}

\end{minipage}

\caption{\textbf{Results for the D-optimal problem for the experimental setup for tomographic reconstruction.} \texttt{Left}: Comparison of Multilevel vs. Single-Level BPGD. The ML-BPGD variant outperforms its single-level counterpart initially. \texttt{Right}: Reconstruction of pixelated mario using $15$ equidistant angles (upper left) and the $15$ Fisher-information maximizing angles (upper right). The bottom row depicts the images downsampled up to 8 channels to match the original image.}
\label{fig:d-design}
\end{figure}

\paragraph{Coarse problem construction} We set up the coarse experimental design matrix $H_1 = A_1^\top \in \R^{(15 ^2) \times (15 \cdot 120)}$. The coarse model is then given by
\begin{equation} \label{eq:coarse-model-d-optimal-design}
    \min_{x \in \Delta^{n_1}(l_1^k, S_1^k)} \psi_{1}^k(x), \quad \psi_{1}^k \text{ as in } \eqref{eq:coarse-model-ML}, \quad f_1(x) = - \ln \det (H_1 X H_1^\top)
\end{equation}
with $\Delta^{n}(l, S) : = \{ x \in \R^{n}: \la \eins, x \ra = S, x_i \geq l \}$ defining the $l$-translated and $S$-scaled probability simplex. For the $k$-th iterate, the lower bound is again recursively computed by the $l_\infty$ argument \eqref{eq: recursive lower bound}, albeit using a different $P$ than the previous numerical experiments. Choosing $S_1^k = \la \eins, x_1^{0,k} \ra$ preserves the consistency of criticality, see \cref{prop:criticality-consistency-simplex}. The solvability of the BPGD iterate for the objectives \eqref{eq:fine-model-d-optimal-design} and \eqref{eq:coarse-model-d-optimal-design} is expanded upon in \cref{subsec:solvability_simplex}, where we use the relative smoothness constant as a step size for all BPGD iterates, with $\tau_\ell \equiv 1$.

\section{Conclusion} \label{sec:conclusions}
We have proposed ML-BPGD, a multilevel extension of Bregman Proximal Gradient Descent for constrained convex optimization problems under relative smoothness. Our approach incorporates coarse-level information to accelerate computation, while explicitly handling constraints at all levels of discretization. We established the well-definedness of the algorithm and provided a convergence guarantee for the function values. The effectiveness of ML-BPGD was demonstrated on large-scale imaging problems with inherent Bregman geometry. Numerical experiments confirm that ML-BPGD achieves a significant acceleration over its single-level counterpart, particularly in the early stages of optimization.

\addtocontents{toc}{\protect\setcounter{tocdepth}{0}}

\section*{Acknowledgments}This work is funded by Deutsche Forschungsgemeinschaft (DFG) under Germany’s Excellence Strategy EXC-2181/1 - 390900948 (the Heidelberg STRUCTURES Excellence Cluster).

\appendix
\addtocontents{toc}{\protect\setcounter{tocdepth}{1}}
\section{Examples of prox functions and relative smoothness} \label{apdx:A}
\subsection{Relative smoothness in Euclidean geometry} For $C = \R^n$, we distinguish two cases. If the objective $f$ has a Lipschitz continuous gradient with an easily computable and numerically efficient constant $L$, then one can choose the quadratic power function $\vphi(x) = \frac{1}{2} \norm{x}_2^2$ as a prox function, which generates the quadratic Euclidean distance as $D_{\frac{1}{2} \norm{\cdot}_2^2}$. This is easily extended to a constrained setting, where the BPGD update reduces to the proximal-gradient version.
However, even in the unconstrained setting, there are many objectives of interest that do not adhere to a Lipschitz gradient. The case where the Hessian grows up 
polynomially in $\norm{x}_2$ is extensively discussed in \cite{Lu2018}, where they construct a suitable polynomial $\vphi$ as a prox function.

\begin{proposition}{\cite[Proposition 2.1]{Lu2018}} \label{prop:Lu-rel-smooth-to-poly}
Suppose $f$ is twice differentiable and satisfies $\|\nabla^2 f(x)\| \allowbreak \leq p_r(\|x\|_2)$, where $p_r(\alpha)$ is an $r$-degree polynomial of $\alpha$. Let $L$ be such that
$p_r(\alpha) \leq L(1 + \alpha^r) \quad \text{for } \alpha \geq 0.$ Then $f$ is $L$-smooth relative to
$ \vphi(x) = \frac{1}{r+2} \|x\|_2^{r+2} + \frac{1}{2} \|x\|_2^2.$
\end{proposition}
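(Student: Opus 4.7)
The plan is to invoke the third equivalent characterization of relative smoothness listed in \cref{sec:preliminaries}, namely $\nabla^2 f(x) \preceq L\nabla^2\vphi(x)$ in the Löwner order, which is available here because $f$ is assumed twice differentiable. The strategy then reduces to two steps: (i) produce a clean Löwner lower bound on $\nabla^2\vphi(x)$, and (ii) pass from the scalar operator-norm bound on $\nabla^2 f(x)$ to a Löwner inequality via symmetry of the Hessian.

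First I would compute $\nabla^2\vphi$ explicitly. Using $\nabla\|x\|_2^{r+2}=(r+2)\|x\|_2^r x$ and differentiating once more, one obtains on $\R^n\setminus\{0\}$
\begin{equation*}
\nabla^2\vphi(x) \;=\; \bigl(\|x\|_2^r + 1\bigr)\,I \;+\; r\,\|x\|_2^{r-2}\,xx^\top.
\end{equation*}
Since $xx^\top\succeq 0$ and $r\geq 0$, the rank-one term is positive semidefinite, so
\begin{equation*}
\nabla^2\vphi(x) \;\succeq\; \bigl(1 + \|x\|_2^r\bigr)\,I,
\end{equation*}
and the same bound holds at $x=0$ by direct evaluation (the rank-one term vanishes for $r\geq 2$, and the remaining edge cases $r\in[0,2)$ follow by a limiting argument or by redoing the computation entrywise from the convex, $C^2$ function $\vphi$).

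Second, since $\nabla^2 f(x)$ is symmetric, its operator norm equals its spectral radius, so $\nabla^2 f(x)\preceq\|\nabla^2 f(x)\|\,I$. Chaining with the polynomial domination hypothesis yields
\begin{equation*}
\nabla^2 f(x) \;\preceq\; \|\nabla^2 f(x)\|\,I \;\leq\; p_r(\|x\|_2)\,I \;\leq\; L\bigl(1+\|x\|_2^r\bigr)\,I \;\preceq\; L\,\nabla^2\vphi(x),
\end{equation*}
which is exactly the characterization (3) of $L$-smoothness of $f$ relative to $\vphi$ on $C=\R^n$.

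The argument is essentially a two-line Löwner calculation once the Hessian of $\vphi$ is in hand, so there is no real obstacle; the only mildly delicate point is verifying that the rank-one correction in $\nabla^2\vphi$ is well-defined and nonnegative for all $r\geq 0$ including at the origin, which is why $\vphi$ was padded with the quadratic $\tfrac12\|x\|_2^2$ in the first place—that summand guarantees the uniform additive $I$ that absorbs the constant term of $p_r$.
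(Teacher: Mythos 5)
Your proof is correct and is essentially the argument given in the cited reference \cite[Proposition~2.1]{Lu2018}, which the paper defers to rather than reproving: compute $\nabla^2\vphi(x)=(1+\|x\|_2^r)I+r\|x\|_2^{r-2}xx^\top\succeq(1+\|x\|_2^r)I$ and chain $\nabla^2 f(x)\preceq\|\nabla^2 f(x)\|I\leq p_r(\|x\|_2)I\leq L(1+\|x\|_2^r)I$ via characterization (3) of relative smoothness. Your handling of the origin and of the rank-one term is also sound, so there is nothing to correct.
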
 
\cref{prop:Lu-rel-smooth-to-poly} is extendable to constraints which have easy to compute projections as shown in the appendix of the aforementioned paper.

\subsection{Relative smoothness in the constrained setting} Lipschitz smoothness of the gradient fails whenever the Hessian of $f$ blows up as one approaches the boundary of the feasible set $C$. This is the case for many objective functions of logarithmic and entropic nature, which cover our examples in \Cref{sec:NumExperiments}. The prox functions are then chosen to best match the geometry of such entropic behavior.
We list the prox functions most often encountered in the literature, see \cite{Bauschke:2017aa,Lu2018,Bauschke:19,Hypentropy2020,AubinFrankowski2022}. Since all of the following functions are separable, we formulate them in the one-dimensional case. The corresponding prox function $\Tilde{\vphi}$ for $n$ dimensions is given by $\Tilde{\vphi}(x) = \sum_{i = 1}^n \vphi(x_i)$.
\begin{itemize}
    \item \textbf{log-barrier function (Burg's entropy)} \; $\vphi(x) = -\ln(x)$, \; $\dom \vphi = \R_{++}$;
    \item \textbf{negative entropy} \; $\vphi(x) = x \ln x - x, \; \dom \vphi = \R_+$ with $0 \ln 0 = 0$;
    \item \textbf{Fermi-Dirac entropy} \; $\vphi(x) = x \ln x + (1-x) \ln(1-x), \; \dom \vphi = [0,1]$;
    \item \textbf{$\beta$-hyperbolic entropy (hypentropy)} \; $\vphi_\beta(x) = x \arcsinh \left( \frac{x}{\beta} \right) + \sqrt{x^2 + \beta^2}, \; \dom \vphi = \R$ for any $\beta > 0$.
    Note that $\vphi_\beta$ interpolates between the negative entropy (as $\beta \to \infty$) and the power function (as $\beta \gg x$).
\end{itemize}
In the following, we present a slight adaptation to the log-barrier function in \Cref{sec:adapt-log-barrier}, to match the constraints of our numerical experiments in \Cref{sec:NumExperiments}. \Cref{sec:rel-smth-for-num-exps} presents the relative smoothness statements used in \Cref{sec:NumExperiments}.
\subsubsection{Example: The doubly-bounded log-barrier function} \label{sec:adapt-log-barrier}
We slightly adapt the log-barrier function $\vphi(x) = - \sum_{i = 1}^n \ln x_i$ to incorporate box boundaries. This is of use when we want the domain of the prox function to match the feasible set exactly. Let $C_\Box := [l,u] \subseteq \R^n$ with $n$-dimensional vectors $l < u$. Define the doubly-bounded log barrier function as $\vphi_\Box(x) := \vphi(x-l) + \vphi(u-x)$, or concretely
    \begin{equation} \label{eq:doubly-bounded-log-barrier}
        \vphi_\Box(x) = -\sum_{i=1}^{n} \left[ \ln (x_i-l_i) + \ln(u_i-x_i)\right].
    \end{equation}
    The following proposition shows that the set of functions that are smooth relative to $\vphi$ is a subset of the $\vphi_\Box$-smooth functions. 
    \begin{proposition} \label{prop:log-barrier-orthant-to-box}
        If $f$ is $L$-smooth relative to $\vphi$ on $\R^n_{+}$ for some $L > 0$, then it is also $L$-smooth relative to $\vphi_\Box$ as given by \eqref{eq:doubly-bounded-log-barrier}. 
    \end{proposition}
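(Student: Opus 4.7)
I would use the ``convexity'' characterization of relative smoothness (item (1) in the list given right after the definition): $f$ is $L$-smooth relative to a strictly convex $\psi$ on a convex set $C$ if and only if $L\psi - f$ is convex on $C$. The hypothesis therefore supplies that $L\vphi - f$ is convex on $\R^n_{++}$, and the goal is to show convexity of $L\vphi_\Box - f$ on $C_\Box$. The natural splitting is
\begin{equation*}
L\vphi_\Box - f = (L\vphi - f) + L(\vphi_\Box - \vphi),
\end{equation*}
where the first summand is convex on $\R^n_{++} \supseteq \intr C_\Box$ (implicitly assuming $l \geq 0$ so that the two domains are comparable). The whole question then reduces to showing that the ``correction'' $\vphi_\Box - \vphi$ is itself convex on $\intr C_\Box$.

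\textbf{The one computation.} Because $\vphi$ and $\vphi_\Box$ are separable, the Hessian of $\vphi_\Box - \vphi$ is diagonal with entries
\begin{equation*}
\frac{1}{(x_i - l_i)^2} + \frac{1}{(u_i - x_i)^2} - \frac{1}{x_i^2}.
\end{equation*}
Here $l_i \geq 0$ gives $0 < x_i - l_i \leq x_i$, hence $(x_i - l_i)^{-2} \geq x_i^{-2}$, and each diagonal entry is strictly positive (thanks to the $(u_i-x_i)^{-2}$ term). So the Hessian is positive definite, $\vphi_\Box - \vphi$ is convex on $\intr C_\Box$, and adding it to the convex function $L\vphi - f$ preserves convexity. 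This is exactly $L$-smoothness of $f$ relative to $\vphi_\Box$.

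\textbf{Where I expect mild friction.} The proof itself is almost a one-liner once one has formulation (1), so there is no real ``hard part''; the only point requiring care is the domain/boundary issue. One must use $l \geq 0$ (implicit in the statement since otherwise $C_\Box \not\subseteq \dom \vphi$) to justify the comparison $(x_i - l_i)^{-2} \geq x_i^{-2}$ and to apply the hypothesis on $\R^n_{++}$ pointwise on $\intr C_\Box$. A secondary matter is that characterization (1) was stated under the tacit assumption of a convex function $f$ whose regularity is enough for the equivalence; if one prefers to avoid twice-differentiability, the argument above only uses the convex-combination characterization and goes through unchanged. If $f$ is $C^2$, the same conclusion can be reached even more directly by combining $\nabla^2 f \preceq L\nabla^2 \vphi$ with $\nabla^2 \vphi_\Box \succeq \nabla^2 \vphi$, which is the same diagonal inequality packaged differently.
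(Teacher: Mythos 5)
Your proof is correct, but it takes a genuinely different route from the paper's. The paper works entirely at the level of Bregman divergences: it uses separability to reduce to $n=1$, writes $D_{\vphi_\Box}(x,y)=D_\vphi(x-l,y-l)+D_\vphi(u-x,u-y)$, drops the second (nonnegative) term, and shows $D_\vphi(x-l,y-l)\geq D_\vphi(x,y)$ by verifying that $g(l):=D_\vphi(x-l,y-l)$ has nonnegative derivative in $l$ (via the elementary inequality $t+\tfrac{1}{t}-2\geq 0$), so that $D_f\leq LD_\vphi\leq LD_{\vphi_\Box}$ follows by chaining. You instead pass to the convexity characterization and show that the correction $\vphi_\Box-\vphi$ is convex by a diagonal Hessian comparison. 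The two arguments establish the same underlying fact --- $D_{\vphi_\Box}\geq D_\vphi$ is equivalent, by linearity of the Bregman divergence, to convexity of $\vphi_\Box-\vphi$ --- so this is a matter of first-order versus second-order bookkeeping. Your version is arguably the more direct one given the explicit separable form of both reference functions, and it correctly avoids imposing twice-differentiability on $f$ itself (only on $\vphi$ and $\vphi_\Box$, which are smooth); note in passing that you use the correct convexity characterization $L\vphi-f$ convex, whereas the paper's item (1) has the sign reversed. The paper's monotonicity-in-$l$ observation is slightly more reusable in spirit (the same translation argument underlies \cref{prop:nnentropy-to-FD}), but your Hessian comparison adapts to that case just as easily. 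Both proofs share the same tacit hypothesis $l\geq 0$ (so that $\intr C_\Box\subseteq\R^n_{++}$ and $x_i-l_i\leq x_i$), which you are right to flag explicitly and which the paper leaves implicit.
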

    \begin{proof}
    Since $\vphi_\Box$ is separable, the linearity of the Bregman divergence allows to show the statement for $n = 1$ without loss of generality. The linearity also implies $D_{\vphi_\Box}(x,y) = D_\vphi(x-l, y-l)+D_\vphi(u-x, u-y)$. Set $g(l):= D_\vphi(x-l, y-l)$. Then, $g^\prime(l) = \frac{y-l}{x-l} + \frac{x-l}{y-l} - 2$ and since $\frac{1}{t} + t - 2 = \frac{(1-t)^2}{t} \geq 0$ for $t > 0$, it follows that $g^\prime(l) \geq 0$ and with it $g(l) \geq g(0)$. The proposition is an immediate consequence of putting this together.
    \begin{equation}
        D_f(x,y) \leq L D_\vphi(x,y) \leq L(D_\vphi(x-l, y-l) + D_\vphi(u-x, u-y)) = LD_{\vphi_\Box}(x,y).
    \end{equation}
    \end{proof}
A similar result is attainable for the relationship between the Fermi-Dirac entropy and the negative entropy. Even more generally, the Fermi-Dirac entropy can be extended to arbitrary box constraints $C_\Box$ by defining
\begin{equation} \label{eq:generalized-FD}
    \vphi_\Box = \sum_{i = 1}^n (x_i - l_i) \ln(x_i - l_i) + (u_i - x_i) \ln(u_i - x_i).
\end{equation}
\begin{proposition}{\cite[Lemma 3]{Petra2013a}} \label{prop:nnentropy-to-FD}
    If $f$ is $L$-smooth relative to the negative entropy on $\R^n_{+}$ for some $L > 0$, then it is also $L$-smooth relative to $\vphi_\Box$ as defined in \eqref{eq:generalized-FD}.
\end{proposition}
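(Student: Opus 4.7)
The plan is to mirror \cref{prop:log-barrier-orthant-to-box} with the negative entropy $\vphi(t) = t\ln t - t$ playing the role that Burg's entropy played there. First, I would invoke separability of $\vphi_\Box$ together with linearity of the Bregman divergence in its generator to reduce to the one-dimensional case. A direct computation, in which the linear $-t$ term of $\vphi$ drops out of the divergence, then yields the decomposition
\[
    D_{\vphi_\Box}(x, y) \;=\; D_{\vphi}(x - l,\, y - l) \;+\; D_{\vphi}(u - x,\, u - y).
\]
Since the second summand is nonnegative, the claim reduces to establishing the monotonicity
\[
    D_{\vphi}(x - l,\, y - l) \;\geq\; D_{\vphi}(x, y),
\]
as this chains with the hypothesis $D_f(x,y) \leq L D_\vphi(x,y)$ to give $D_f(x, y) \leq L D_{\vphi_\Box}(x, y)$.

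For the monotonicity I would set $g(l) := D_\vphi(x - l,\, y - l)$ and differentiate with respect to $l$. A short calculation gives
\[
    g'(l) \;=\; \frac{x - l}{y - l} - 1 - \ln\frac{x - l}{y - l} \;=\; (t - 1) - \ln t,
\]
with $t := (x - l)/(y - l) > 0$. The classical inequality $\ln t \leq t - 1$ then yields $g'(l) \geq 0$, whence $g(l) \geq g(0) = D_\vphi(x, y)$, which is precisely what is needed.

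The whole argument is genuinely parallel to the log-barrier case; no step presents a serious obstacle. The only substantive input beyond the bookkeeping is the elementary inequality $\ln t \leq t - 1$, which plays here the role that $t + 1/t \geq 2$ did for Burg's entropy. The only care required is the verification of the Bregman decomposition formula (a one-line computation using $\vphi'(t) = \ln t$ after dropping the linear part) and the sign computation for $g'$.
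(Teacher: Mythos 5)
Your argument is correct, and it is exactly the proof the paper leaves implicit: the paper states the result only by analogy with \cref{prop:log-barrier-orthant-to-box} and defers to an external reference, so your decomposition $D_{\vphi_\Box}(x,y) = D_\vphi(x-l,y-l) + D_\vphi(u-x,u-y)$ together with the monotonicity of $l \mapsto D_\vphi(x-l,y-l)$ via $\ln t \le t-1$ is precisely the intended analogue (with $\ln t \le t-1$ replacing $t + 1/t \ge 2$). The only point worth making explicit is that the conclusion $g(l) \ge g(0)$ uses $l \ge 0$, i.e.\ $[l,u] \subseteq \R^n_+$, the same tacit assumption as in the log-barrier case.
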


\subsubsection{Relative smoothness of the numerical examples} \label{sec:rel-smth-for-num-exps}
For completeness, we provide the relative smoothness statements for the objectives of our numerical experiments. In the following, let $A \in \R^{m \times n}_{+}$ with nonzero rows, and $b \in \R^m_{++}$.
\begin{lemma}{\cite[Lemma 7]{Bauschke:2017aa}} \label{lm:klbAx_rel_smth_to_log_barrier}
    The Poisson log-likelihood $\KL(b,Ax)$ is $\norm{b}_1$-smooth relative to the log-barrier function $\vphi(x) = - \sum_{i = 1}^n \ln(x_i)$ on $\R^n_{++}$.
\end{lemma}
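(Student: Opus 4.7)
The plan is to verify relative smoothness via the Hessian characterization (criterion (3) following the definition of relative smoothness): under twice differentiability, $\|b\|_1$-smoothness of $f(x) = \KL(b,Ax)$ relative to $\vphi(x) = -\sum_i \ln x_i$ on $\R^n_{++}$ is equivalent to
\begin{equation*}
    \nabla^2 f(x) \preceq \|b\|_1 \nabla^2 \vphi(x) \quad \text{for all } x \in \R^n_{++}.
\end{equation*}
Computing directly gives $\nabla^2 \vphi(x) = \Diag(x_i^{-2})$ and $\nabla^2 f(x) = A^\top \Diag\!\bigl(b_j / (Ax)_j^2\bigr) A$. The inequality therefore reduces to the scalar estimate
\begin{equation*}
    \sum_{j=1}^m \frac{b_j\, (Av)_j^2}{(Ax)_j^2} \leq \|b\|_1 \sum_{i=1}^n \frac{v_i^2}{x_i^2}, \qquad \forall\, v \in \R^n.
\end{equation*}

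To establish this, I would exploit that, for each $j$ with $(Ax)_j > 0$, the quantities $p_{ji} := A_{ji} x_i / (Ax)_j$ are nonnegative and sum to $1$ in $i$, i.e.\ form a probability distribution (using $A \geq 0$, $x > 0$). Writing $(Av)_j / (Ax)_j = \sum_i p_{ji}\, (v_i/x_i)$ and applying Jensen's inequality to the convex map $t \mapsto t^2$ gives
\begin{equation*}
    \left(\frac{(Av)_j}{(Ax)_j}\right)^2 \leq \sum_i p_{ji}\, \frac{v_i^2}{x_i^2} = \sum_i \frac{A_{ji}}{(Ax)_j}\cdot \frac{v_i^2}{x_i}.
\end{equation*}
Multiplying by $b_j$ and summing over $j$ yields
\begin{equation*}
    \sum_j \frac{b_j (Av)_j^2}{(Ax)_j^2} \leq \sum_i \frac{v_i^2}{x_i} \sum_j \frac{b_j A_{ji}}{(Ax)_j}.
\end{equation*}

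The key remaining step is the bound $\sum_j b_j A_{ji}/(Ax)_j \leq \|b\|_1 / x_i$ for each $i$. This follows from the elementwise inequality $(Ax)_j = \sum_k A_{jk} x_k \geq A_{ji} x_i$, valid because all summands are nonnegative, so that $A_{ji}/(Ax)_j \leq 1/x_i$ whenever $A_{ji}>0$ and trivially otherwise. Summing over $j$ gives the bound, and combining it with the previous display yields exactly the desired $\|b\|_1 \sum_i v_i^2/x_i^2$ on the right-hand side.

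There is no serious obstacle here; the only small subtlety is discovering the right weighting $p_{ji} = A_{ji} x_i / (Ax)_j$ so that Jensen's inequality produces the factor $1/x_i$ (rather than $1/x_i^2$) inside the double sum, which is then upgraded to $1/x_i^2$ by the trivial bound $A_{ji}/(Ax)_j \leq 1/x_i$. The constant $\|b\|_1$ appears precisely because the weights $b_j$ are not probability weights and so are not cancelled by the Jensen normalization. The assumption that $A$ has nonzero rows is implicitly used to ensure $(Ax)_j > 0$ for $x \in \R^n_{++}$ so that all quotients are well defined.
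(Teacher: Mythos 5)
Your proof is correct and is essentially the standard argument from the cited source (Bauschke--Bolte--Teboulle, Lemma~7), which the paper itself does not reproduce: one compares Hessians, writes $(Av)_j/(Ax)_j$ as a convex combination of the $v_i/x_i$ with weights $A_{ji}x_i/(Ax)_j$, applies Jensen (equivalently Cauchy--Schwarz), and finishes with the elementwise bound $A_{ji}x_i \leq (Ax)_j$. All steps check out, including the use of $A \geq 0$ with nonzero rows and $x \in \R^n_{++}$ to keep the denominators positive.
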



\begin{lemma}{\cite[Proposition 2.2]{Lu2018}} \label{lm:d-opt-design_rel_smth_to_log_barrier}
    The D-optimal design problem is $1$-smooth relative to the log-barrier function $\vphi(x) = - \sum_{i = 1}^n \ln(x_i)$ on $\R^n_{++}$. 
\end{lemma}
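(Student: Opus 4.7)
The plan is to verify the Hessian characterization of relative smoothness stated as item (3) in the introduction of \cref{sec:preliminaries}, namely $\nabla^2 f(x) \preceq \nabla^2 \vphi(x)$ on $\R^n_{++}$, which here amounts to showing $\nabla^2 f(x) \preceq \Diag(1/x_i^2)$. Since both $f$ and $\vphi$ are $C^\infty$ on $\R^n_{++}$, this is equivalent to 1-relative smoothness.

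First, I would compute the gradient and Hessian of $f$ in closed form. Writing $M(x) := HXH^\top = \sum_i x_i h_i h_i^\top$, where $h_i$ denotes the $i$-th column of $H$, the standard matrix calculus for $\ln\det$ gives
\begin{equation*}
\partial_i f(x) = -h_i^\top M(x)^{-1} h_i, \qquad \partial_{ij}^2 f(x) = \bigl(h_i^\top M(x)^{-1} h_j\bigr)^2.
\end{equation*}
Hence for any direction $v\in\R^n$, setting $V:=\Diag(v)$, a short manipulation yields the compact trace identity
\begin{equation*}
v^\top \nabla^2 f(x)\, v \;=\; \tr\bigl(M^{-1} HVH^\top M^{-1} HVH^\top\bigr) \;=\; \tr(N^2), \qquad N := M^{-1/2} HVH^\top M^{-1/2}.
\end{equation*}

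The second step is to change variables so as to exploit an orthogonality structure. Define $\widetilde H := M(x)^{-1/2} H$ and $B := \widetilde H X^{1/2}$. Then $BB^\top = M^{-1/2}(HXH^\top)M^{-1/2} = I_m$, so $B$ has orthonormal rows and $P := B^\top B$ is the $n\times n$ orthogonal projection onto $\rge(B^\top)$. Writing $z_i := v_i/x_i$ and $D := \Diag(z)$, the identity $\widetilde H = B X^{-1/2}$ gives $N = B D B^\top$, and cyclicity of the trace produces
\begin{equation*}
v^\top \nabla^2 f(x)\, v \;=\; \tr(N^2) \;=\; \tr\bigl((DP)^2\bigr).
\end{equation*}

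The final step is a two-line matrix inequality: by Cauchy--Schwarz on the Frobenius inner product, $\tr(C^2) \leq \|C\|_F^2$ for any real square matrix $C$, so with $C = DP$,
\begin{equation*}
\tr\bigl((DP)^2\bigr) \;\leq\; \|DP\|_F^2 \;=\; \tr(PD^2 P) \;=\; \tr(D^2 P) \;\leq\; \tr(D^2) \;=\; \sum_i \frac{v_i^2}{x_i^2},
\end{equation*}
where the last inequality uses $D^2 \succeq 0$ together with $P \preceq I_n$. The right-hand side is exactly $v^\top \nabla^2 \vphi(x)\, v$, so $\nabla^2 f(x) \preceq \nabla^2 \vphi(x)$ on $\R^n_{++}$, completing the proof.

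I expect the main obstacle to be finding the right change of variables that turns the quartic-looking expression $\sum_{i,j} v_i v_j (h_i^\top M^{-1} h_j)^2$ into $\tr((DP)^2)$ for a projection $P$; once that reformulation is in place, the bound reduces to the standard trace inequality and the projection bound, both essentially one line each.
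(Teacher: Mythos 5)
Your argument is correct. Note that the paper itself gives no proof of this lemma --- it is stated as a citation to \cite[Proposition 2.2]{Lu2018} --- so there is nothing internal to compare against; what you have written is essentially the argument of the cited reference, namely verifying the Hessian characterization $\nabla^2 f(x) \preceq \nabla^2 \vphi(x) = \Diag(1/x_i^2)$ via the normalization $BB^\top = I_m$. The only stylistic difference is that Lu--Freund--Nesterov phrase the final bound through the Hadamard (Schur) square of the projection $P = B^\top B$, i.e. $P \circ P \preceq \Diag(P) \preceq I_n$, whereas you reach the same conclusion through the trace inequalities $\tr((DP)^2) \leq \norm{DP}_F^2 = \tr(D^2P) \leq \tr(D^2)$; both are one-line arguments once the change of variables is in place, and your computation of $\nabla^2 f$ and the reduction to $\tr((DP)^2)$ check out.
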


\begin{lemma}{\cite[Lemma 8]{Bauschke:2017aa}}\label{lm:klAxb_rel_smth_to_neg_entropy}
    $\KL(Ax,b)$ is $\norm{A}_1$-smooth relative to the negative entropy $\vphi(x) = \sum_{i = 1}^n \left(x_i\ln x_i - x_i\right)$ on $\R^n_{++}$.
\end{lemma}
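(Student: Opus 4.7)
The plan is to verify the third equivalent characterization of relative smoothness recorded in \cref{sec:preliminaries}: since both $f(x) := \KL(Ax, b)$ and $\vphi$ are twice continuously differentiable on the open domain $\R^n_{++}$, it suffices to establish the pointwise Löwner inequality $\nabla^2 f(x) \preceq \norm{A}_1 \nabla^2 \vphi(x)$ for every $x \in \R^n_{++}$. This converts a global divergence inequality into a purely algebraic matrix inequality, which is the standard route for relative-smoothness estimates in the entropy literature.

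First I would compute the two Hessians. Separability of $\vphi$ immediately yields $\nabla^2 \vphi(x) = \Diag(1/x_1, \dots, 1/x_n)$. For the data term, expanding $\KL(Ax, b) = \sum_i \bigl[(Ax)_i \ln((Ax)_i/b_i) - (Ax)_i + b_i\bigr]$, the chain rule gives $\nabla f(x) = A^\top(\ln(Ax) - \ln b)$ and hence $\nabla^2 f(x) = A^\top \Diag(1/(Ax))\, A$, which is well defined because $A \geq 0$ has no zero rows and $x > 0$ imply $(Ax)_i > 0$. The target Löwner inequality is therefore equivalent to the scalar statement
\begin{equation*}
    \sum_i \frac{(Au)_i^2}{(Ax)_i} \;\leq\; \norm{A}_1 \sum_j \frac{u_j^2}{x_j} \qquad \text{for all } u \in \R^n.
\end{equation*}

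The key step is a row-wise weighted Cauchy--Schwarz applied with the splitting $A_{ij} u_j = \sqrt{A_{ij} x_j}\cdot\bigl(\sqrt{A_{ij}/x_j}\, u_j\bigr)$, which produces
\begin{equation*}
    (Au)_i^2 \;=\; \Bigl(\sum_j A_{ij} u_j\Bigr)^2 \;\leq\; \Bigl(\sum_j A_{ij} x_j\Bigr)\Bigl(\sum_j A_{ij} u_j^2/x_j\Bigr) \;=\; (Ax)_i \sum_j \frac{A_{ij} u_j^2}{x_j}.
\end{equation*}
Dividing by $(Ax)_i$ and summing over $i$ exchanges the order of summation into $\sum_j (u_j^2/x_j) \sum_i A_{ij}$, and the inner column sum is bounded by $\norm{A}_1 = \max_j \sum_i A_{ij}$ (for $A \geq 0$ the $1$-norm agrees with the maximum column sum), which closes the estimate.

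The computation is otherwise mechanical, so the one nonobvious ingredient is recognizing the correct pairing for Cauchy--Schwarz; the natural alternatives (e.g.\ pairing $\sqrt{x_j}$ with $u_j/\sqrt{x_j}$ without the $A_{ij}$ weight) fail to produce the column-sum bound on the right-hand side. With that pairing in place, the row-wise estimate followed by a single swap of summation order is all that is required.
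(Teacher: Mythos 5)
Your proposal is correct. The paper does not prove this lemma at all --- it is imported verbatim as \cite[Lemma 8]{Bauschke:2017aa} --- so there is no in-paper argument to compare against; your reduction to the L\"owner inequality $\nabla^2 f(x) = A^\top \Diag(1/(Ax))\,A \preceq \norm{A}_1 \Diag(1/x)$ (licensed by equivalence (3) in \cref{sec:preliminaries}, since everything is twice differentiable on $\R^n_{++}$ given $A \geq 0$ with nonzero rows) followed by the weighted Cauchy--Schwarz split $A_{ij}u_j = \sqrt{A_{ij}x_j}\cdot\sqrt{A_{ij}/x_j}\,u_j$ and a swap of summation order is the standard, correct derivation of exactly this constant $\norm{A}_1 = \max_j \sum_i A_{ij}$.
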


\section{Efficient solving of BPGD iterates}
Solving the BPGD subproblems \eqref{eq:BPGD-subproblem}
\begin{equation*}
    x^{+}_\tau = \argmin_{u \in C} \{\la c, u \ra + \vphi(u) \}
\end{equation*}
for $c = \tau \df(x) - \dvphi(x)$ is at the core of \cref{ass:solveability,assump:solvability-coarse-2l,assump:solvability-coarse-ml} ensuring that our multilevel method remains well-defined. The following sections illustrate the solvability of these subproblems for the two main proximity functions we work with, as adapted to our needs in \Cref{sec:NumExperiments}.
\subsection{The negative entropy} \label{sec:solving_BPGD_iterates_neg_entropy}
Let $f$ be $L$-smooth relative to the negative entropy $\vphi(x) =  \sum_{i = 1}^n x_i \ln(x_i) - x_i$. Since $\vphi$ is Legendre on $\R^n_{+}$ with $\nabla \vphi^\ast(y) = e^y$, BPGD is equivalent to a mirror descent (MD) step of the exponentiated gradient
\begin{equation} \label{eq:SMART}
    x^+_\tau = xe^{- \tau \df(x)}.
\end{equation}
\cref{prop:nnentropy-to-FD} allows us to incorporate box-constraints into the negative entropy. The prox function \eqref{eq:generalized-FD} is also Legendre on $C_\Box = [l,u] \subseteq \R^n$ and yields the following MD update
\begin{equation} \label{eq:B-SMART}
        x_\tau^+ = \frac{\frac{x-l}{u-x}-\tau \df(x)}{1 + \frac{x-l}{u-x}}.
\end{equation}
For the special case of $f(x) = \KL(Ax,b)$, the updates \eqref{eq:SMART} and \eqref{eq:B-SMART} are denoted as SMART and B-SMART respectively, compare \cite{Andersen:1984,Petra2013a}.
\subsection{The log-barrier function} \label{sec:solving_BPGD_iterates_log_barrier}
Let $f$ be $L$-smooth relative to the log-barrier function $\vphi(x) = - \sum_{i = 1}^n \ln(x_i)$. Since $\vphi$ is Legendre on $\R^n_{++}$ with $\nabla \vphi^\ast(y) = \frac{1}{y}$, the BPGD is equivalent to the MD update
\begin{equation}
    x_\tau^+ = \left(\frac{1}{x} + \tau \df(x)\right)^{-1}.
\end{equation}
\cref{prop:log-barrier-orthant-to-box} 
enables the incorporation of box constraints  $C_\Box$
into the log-barrier framework by defining the barrier function
$\vphi_\Box(x) = - \sum_{i = 1}^n \left[ \ln(x_i - l_i) + \ln(u_i - x_i)\right]$. However, $\vphi_\Box$ is not a Legendre reference function, as its gradient $\nabla \vphi_\Box(x) = \frac{u-l}{(x-l)(u-x)}$ is non-invertible. Consequently,  the BPGD update
\begin{equation}
    \frac{u-l}{(x^+_\tau-l)(u-x^+_\tau)} = \frac{u-l}{(x-l)(u-x)} - \tau \nabla f(x)
\end{equation}
does not admit a mirror descent interpretation.
In the case of unbounded lower or upper bounds, the respective reference functions $\vphi(u-x)$ and $\vphi(x-l)$ are Legendre and we are again in the MD setting with updates given by
\begin{equation}
    x_\tau^+ = l + \frac{1}{\frac{1}{x-l} + \tau \df(x)}, \quad C = \R^n_{>l}
\end{equation}
and
\begin{equation}
    x_\tau^+ = u - \frac{1}{\frac{1}{u-x} + \tau \df(x)}, \quad C = \R^n_{<u}
\end{equation}
respectively.
\subsubsection{Solvability beyond $\dom \vphi = C$} \label{subsec:solvability_simplex}
We briefly illustrate that the solvability of \eqref{eq:BPGD-subproblem} is not limited to the MD special case. Let $f$ be a differentiable function on the relative interior of the $n$-dimensional probability simplex $\Delta^n := \{x \in \R^n: \la \eins, x \ra = 1, x_i \geq 0 \}$ and assume it is $L$-smooth relative to the 
log-barrier function $\vphi$. We need to solve
    \begin{equation} \label{eq:subproblem-d-optimal-design}
        x^{+}_\tau = \argmin_{u \in C} \{\la c, u \ra - \sum_{i = 1}^n \ln u_i \}, \quad \text{ with }c = \tau \df(x) - \frac{1}{x}, \qquad C = \Delta^n.
    \end{equation}
    This subproblem does not have a closed-form solution but remains easily computable. The first-order optimality conditions of \eqref{eq:subproblem-d-optimal-design} imply that the update takes the form $x_\tau^+ = \frac{1}{c + \xi}$ for some scalar $\xi$ which must satisfy $\sum_{i = 1}^n \frac{1}{c_i + \xi} - 1 = 0$, see \cite{Lu2018} for more details. This equation can be efficiently solved via root-finding. 
    This easily extends to the relative interior of the scaled and translated $n$-dimensional probability simplex
    \begin{equation}
    \Delta^{n}(l,S) := \{ x \in \R^{n}: \sum_{i = 1}^{n} x_i = S, x_i \geq l \}
    \end{equation}
    with $l \in \R^n$ and scalar $S > 0$. Taking $C = \Delta^n(l,S)$, solving \eqref{eq:subproblem-d-optimal-design} requires finding the root of
    \begin{equation}
    d(\xi) := \sum_{i = 1}^{n_\ell} \frac{1}{c_i + \xi} - S
    \end{equation}
    on the interval $\mc{U}:=(a,b)$, $a := -\min_i \{c_i\}, b := \max_i \{\frac{1}{l_i} - c_i\}$ to get the update $x_\tau^+ = \frac{1}{c + \xi}$. Note, that $d(\xi)$ is strictly decreasing on $\mc{U}$ with $d(\xi) \to \infty$ as $\xi \to a$ and $d(\xi) \to -S$ for $\xi \to b$ for $b \gg 0$. Thus, the root is unique and is solvable via a suitable root-finding methods, like Newton method or bisection method. 

\addtocontents{toc}{\protect\setcounter{tocdepth}{0}}

\FloatBarrier
\bibliographystyle{plain}
\bibliography{Literature}
\end{document}